\def\eoe{\unskip\ \hglue0mm\hfill$\lozenge$\smallskip\goodbreak}
\def\th@plain{%
  \thm@notefont{}
  \itshape 
}
\def\th@definition{%
  \thm@notefont{}
  \normalfont 
}
\newcommand{\CC}{{\mathbb{C}}}  
\newcommand{\NN}{{\mathbb{N}}}  
\newcommand{\RR}{{\mathbb{R}}}  
\renewcommand{\SS}{{\mathbb{S}}} 
\newcommand{\ZZ}{{\mathbb{Z}}}  
\newcommand{\Der}{{\operatorname{Der}}}  
\newcommand{\pr}{{\operatorname{pr}}} 
\newcommand{\Stab}{{\operatorname{Stab}}} 
\newcommand{\Vect}{{\operatorname{vect}}}  
\newcommand{\GL}{{\operatorname{GL}}}  
\newcommand{\CIN}{{C^\infty}}   
\newcommand{\toto}{{~\rightrightarrows~}} 
\newcommand{\hypref}[2]{{#2~\ref{#1}}}
\newcommand{\ifwork}[1]{\ifthenelse{\boolean{workmode}}{#1}{}}
\newcommand{\comment}[1]{}
\newcommand{\mute}[1]{}
\newcommand{\printname}[1]{}
\renewcommand{\comment}[1]{{\marginpar{*}\ \scriptsize{#1}\ }}
\renewcommand{\printname}[1]
    {\smash{\makebox[0pt]{\hspace{-1.0in}\raisebox{8pt}{\tiny #1}}}}
\newcommand{\labell}[1] {\label{#1} \printname{#1}}
\newcommand{\ifsection}[2]{\ifthenelse{\boolean{sections}}{#1}{#2}}
\theoremstyle{plain}
    \newtheorem{theorem}{Theorem}[section]
    \newtheorem{theorem}{Theorem}
\newtheorem{proposition}[theorem]{Proposition}
\newtheorem{corollary}[theorem]{Corollary}
\newtheorem{lemma}[theorem]{Lemma}
\theoremstyle{definition}
\newtheorem{definition}[theorem]{Definition}
\newtheorem{example}[theorem]{Example}
\newtheorem{remark}[theorem]{Remark}
\def\eoe{\unskip\ \hglue0mm\hfill$\diamond$\smallskip\goodbreak} 
\newtheorem*{main}{Main Theorem}
\newtheorem*{theoremA}{Theorem A}
\newtheorem*{theoremB}{Theorem B}
\newcommand{\diffeol}{\mathbf{Diffeol}} 
\newcommand{\diffsp}{\mathbf{DiffSp}} 
\newcommand{\orb}{\mathbf{Orb}} 
\newcommand{\cod}{\operatorname{cod}} 
\title{The Differential Structure of an Orbifold}
\author{Jordan Watts}
\address{Department of Mathematics, University of Colorado at Boulder, CO, 80309, USA}
\email{jordan.watts@colorado.edu}
\date{September 28, 2015}
\keywords{orbifold, finite group action, differential space, Sikorski space, stratified space, Milnor number, function codimension.}
\thanks{2010 AMS \emph{Mathematics subject classification}. Primary: 57R18, Secondary: 58A40.}
\begin{document}

\begin{abstract}
We prove that the underlying set of an orbifold equipped with the ring of smooth real-valued functions completely determines the orbifold atlas.  Consequently, we obtain an essentially injective functor from orbifolds to differential spaces.
\end{abstract}

\maketitle

\section{Introduction}\labell{s:intro}

Consider an (effective) orbifold $X$; that is, in particular, a space that locally is the quotient of a smooth manifold by an effective finite Lie group action.  The family of all ``smooth'' functions consists of real-valued functions on $X$ that locally lift to these manifolds as smooth functions invariant under the finite group actions.  This family is an example of a (Sikorski) differential structure (see \hypref{d:diff space}{Definition}).  The purpose of this paper is to prove the following theorem.

\begin{main}\labell{t:main}
Given an orbifold, its orbifold atlas can be constructed out of invariants of the differential structure.
\end{main}

This result can be tailored to be in the form of a functor from the ``category of orbifolds'' to differential spaces which is essentially injective on objects.  Of course, the ``category of orbifolds'' has a number of different definitions, depending on one's perspective.  There is the classical ``category'' defined by Satake \cite{satake} and further developed by Thurston \cite{thurston} and Haefliger \cite{haefliger}.  There are subtle differences between the definitions given by Satake and Haefliger, but we choose not to expand upon these here. (\cite{IZKZ} does deal with this subtlety, however).   There is also the category of effective proper \'etale Lie groupoids (with various choices for the arrows), or the corresponding 2-subcategory of geometric stacks.  See, for example, \cite{HS},\cite{lerman},\cite{moerdijk},\cite{MM},\cite{MP},\cite{pronk}. Choosing to use the weak 2-category of Lie groupoids with bibundles as arrows, we have:

\begin{theoremA}\labell{t:gpds}
There is a functor $F$ from the weak 2-category of effective proper \'etale Lie groupoids with bibundles as arrows to differential spaces that is essentially injective on objects.
\end{theoremA}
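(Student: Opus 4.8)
The plan is to realise $F$ concretely on orbit spaces and then to read off essential injectivity from the Main Theorem together with the now-standard dictionary between orbifold groupoids and orbifolds. To an effective proper \'etale Lie groupoid $\mathcal{G}=(\mathcal{G}_1\rightrightarrows\mathcal{G}_0)$ I would associate its orbit space $|\mathcal{G}|$, which carries a canonical orbifold atlas (charts are quotients of sufficiently small slices by their isotropy groups, glued along germs of arrows) and hence the canonical differential structure $\mathcal{D}_{\mathcal{G}}$ of invariant, locally liftable functions from the introduction; set $F(\mathcal{G}):=(|\mathcal{G}|,\mathcal{D}_{\mathcal{G}})$, an object of $\diffsp$. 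A bibundle $P\colon\mathcal{G}\to\mathcal{H}$ (commuting left $\mathcal{G}$- and right $\mathcal{H}$-actions, principal on the appropriate side) induces a continuous map $|P|\colon|\mathcal{G}|\to|\mathcal{H}|$ of orbit spaces which in charts is covered by smooth equivariant maps, hence pulls $\mathcal{D}_{\mathcal{H}}$ into $\mathcal{D}_{\mathcal{G}}$; set $F(P):=|P|$. The routine checks --- a $2$-isomorphism of bibundles is an equivariant diffeomorphism over the identities and so leaves $|P|$ unchanged; the bibundle composite induces the composite of orbit maps; the unit bibundle induces the identity --- then show that $F$ is well defined on $2$-isomorphism classes and respects composition and units, i.e.\ defines a functor from the weak $2$-category of effective proper \'etale Lie groupoids with bibundles to $\diffsp$ (equivalently, a pseudofunctor into $\diffsp$ regarded as a $2$-category with only identity $2$-cells).

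For essential injectivity on objects, suppose $F(\mathcal{G})\cong F(\mathcal{H})$ in $\diffsp$, say via a diffeomorphism $\varphi\colon|\mathcal{G}|\to|\mathcal{H}|$ carrying $\mathcal{D}_{\mathcal{G}}$ onto $\mathcal{D}_{\mathcal{H}}$. By the Main Theorem the orbifold atlas on $|\mathcal{G}|$ is constructed from invariants of $\mathcal{D}_{\mathcal{G}}$ alone, and likewise that of $|\mathcal{H}|$ from $\mathcal{D}_{\mathcal{H}}$; since $\varphi$ is an isomorphism of differential structures it transports these invariants, and therefore the reconstructed atlas, from one side to the other. Hence $\varphi$ is a diffeomorphism of orbifolds, not merely of differential spaces, so the orbifolds underlying $\mathcal{G}$ and $\mathcal{H}$ are isomorphic in the atlas-theoretic sense.

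Finally, by the equivalence between the weak $2$-category of effective proper \'etale Lie groupoids with bibundles and the bicategory of effective orbifolds (\cite{MP},\cite{pronk}), under which invertible bibundles --- the Morita equivalences --- correspond precisely to orbifold diffeomorphisms, the isomorphism of the underlying orbifolds lifts to a Morita equivalence $\mathcal{G}\simeq\mathcal{H}$, i.e.\ to an equivalence in the weak $2$-category; thus $F(\mathcal{G})\cong F(\mathcal{H})$ forces $\mathcal{G}\cong\mathcal{H}$. The crux of the whole argument is the middle step: that an isomorphism of the bare differential structures is automatically an orbifold diffeomorphism. This is exactly where the Main Theorem is needed in its strong ``constructed out of invariants'' form rather than a mere ``uniquely determined'' statement --- the atlas is output by an intrinsic recipe, hence is natural with respect to isomorphisms of differential structures. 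The remaining work is bookkeeping (functoriality of $F$) and a careful invocation of the groupoid/orbifold dictionary, where the only delicate points are effectiveness and the precise flavour of $2$-category in play.
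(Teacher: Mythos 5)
Your proposal is correct and follows essentially the same route as the paper: define $F$ by sending a groupoid to its orbit space with the quotient (equivalently, orbifold) differential structure and a bibundle to the induced map on orbit spaces, then deduce essential injectivity by reconstructing the orbifold atlases via the Main Theorem and passing back to Morita equivalence through the standard groupoid/orbifold dictionary. The only difference is in the bookkeeping: where you assert that the induced map is locally covered by smooth equivariant maps, the paper instead verifies smoothness of $\bar{P}$ directly by descending invariant functions through the principal $H$-bundle structure of the bibundle.
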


Here, ``essentially injective'' means that given two objects $G$ and $H$ such that $F(G)\cong F(H)$, we have $G\simeq H$, where in this case $\simeq$ means Morita equivalent.  It should be noted that this functor is neither full nor faithful (see \hypref{x:not full nor faithful}{Example}, which consists of Examples 24 and 25 of \cite{IZKZ}).  The other modifications of the category of Lie groupoids (including stacks) mentioned in the references listed above will yield a similar theorem.

In \cite{IZKZ}, Iglesias-Zemmour, Karshon, and Zadka define the notion of a ``diffeological orbifold'', and show that this agrees with the classical definitions as found in \cite{satake} and \cite{haefliger}.  Using this, we show:

\begin{theoremB}\labell{t:IZKZ}
There is a functor $G$ from the weak 2-category of effective proper \'etale Lie groupoids with bibundles as arrows to diffeological spaces that is essentially injective on objects.
\end{theoremB}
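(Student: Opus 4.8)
The plan is to build $G$ by enriching the functor $F$ of Theorem~A: instead of remembering only the ring of smooth real-valued functions, remember the full diffeology of the orbifold, and then read off essential injectivity for diffeological spaces from essential injectivity for differential spaces.

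First I would define $G$ on objects. An effective proper \'etale Lie groupoid $\mathcal G$ presents an effective orbifold, namely its orbit space together with the atlas coming from the local quotient charts; by the results of Iglesias-Zemmour, Karshon and Zadka in \cite{IZKZ} this orbifold carries a canonical diffeology (the one generated by the quotient charts), and their work identifies the resulting category of diffeological orbifolds with the classical orbifolds of Satake and Haefliger. Set $G(\mathcal G)$ to be this diffeological space. On $1$-morphisms, a bibundle $P\colon\mathcal G\to\mathcal H$ induces a map between the underlying orbit spaces; I would take $G(P)$ to be this map, viewed as a map of diffeological spaces. Since Morita equivalent groupoids present the same orbifold and $2$-isomorphic bibundles induce the same map of orbit spaces, at the level of underlying set maps the assignment $G$ coincides with $F$.

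The next step is the compatibility $U\circ G=F$, where $U\colon\diffeol\to\diffsp$ is the forgetful functor sending a diffeological space to the differential structure of those real-valued functions that pull back every plot to a smooth function, and sending a diffeologically smooth map to the underlying set map (which is automatically a morphism of differential spaces). This $U$ is the identity on underlying sets and on underlying maps, hence faithful. By \cite{IZKZ}, the smooth functions on a diffeological orbifold are exactly those that locally lift to finite-group-invariant smooth functions on the charts, so $U(G(\mathcal G))$ is precisely the differential space $F(\mathcal G)$. It then follows formally that $G$ is a well-defined functor out of the weak $2$-category --- composition, units, and invariance under $2$-isomorphisms of bibundles are all inherited from the corresponding properties of $F$, because $U$ is faithful and $U\circ G$ and $F$ agree on underlying maps --- and that $U\circ G=F$.

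Essential injectivity is then a short diagram chase: if $G(\mathcal G)\cong G(\mathcal H)$ in $\diffeol$, applying $U$ gives $F(\mathcal G)=U(G(\mathcal G))\cong U(G(\mathcal H))=F(\mathcal H)$ in $\diffsp$, and Theorem~A (whose essential injectivity is what the Main Theorem delivers) forces $\mathcal G\simeq\mathcal H$. The main obstacle is the input to the compatibility step: one must verify that a bibundle really does produce a diffeologically smooth map of orbit spaces (so that $G$ is even defined on morphisms), and that the differential structure underlying the natural orbifold diffeology is exactly the ring of locally liftable functions. Both are supplied by the local analysis in \cite{IZKZ} together with the fact, already used in Theorem~A, that bibundles are locally modeled by equivariant smooth maps of charts; granting these, the rest of the argument is formal.
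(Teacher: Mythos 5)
Your argument is correct and coincides with the second of the paper's two proofs: the paper likewise establishes $F=\mathbf{\Phi}\circ G$ (your forgetful functor $U$ is the paper's $\mathbf{\Phi}$, and the identification of $\Phi\mathcal{D}$ with the orbifold differential structure is exactly Proposition~\ref{p:factor through diffeol}) and then deduces essential injectivity of $G$ from that of $F$. The paper also records a more direct first proof that bypasses the Main Theorem entirely, reading essential injectivity of $G$ straight off the correspondence between diffeological orbifolds and classical orbifolds in \cite{IZKZ}, with the functoriality of $G$ on bibundles supplied by the general groupoid-to-diffeology functor of \cite{watts-gpds} rather than by the local lifting argument you sketch.
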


We give two proofs of this.  The first uses the fact that $G$ is the restriction of a more general functor from the weak 2-category of Lie groupoids to diffeological spaces introduced in \cite[Section 4]{watts-gpds}.  The essential injectivity follows immediately from the work of Iglesias-Zemmour, Karshon, and Zadka.  The second proof of the essential injectivity of \hyperref[t:IZKZ]{Theorem B} uses the fact that the functor $F$ in \hyperref[t:gpds]{Theorem A} factors as $\mathbf{\Phi}\circ G$, where $\mathbf{\Phi}$ is a faithful functor from diffeological spaces to differential spaces sending a diffeological space to its underlying set equipped with the ring of diffeologically smooth functions (see \cite[Chapter 2]{watts} and \cite{BIZKW}).  Both \hyperref[t:gpds]{Theorem A} and \hyperref[t:IZKZ]{Theorem B} rely on a known correspondence between effective proper \'etale Lie groupoids and orbifolds in the classical sense (see \hypref{r:groupoid}{Remark}).  For more on the relationship between Lie groupoids and diffeological spaces, see \cite{KZ} and \cite{watts-gpds}.

The central idea behind the proof of the Main Theorem is as follows.  To reconstruct the orbifold, one needs three ingredients: the topology, the orbifold stratification, and the order of points at codimension-2 strata (see \hypref{d:point order}{Definition}).  We shows that all three of these are invariants of the differential structure of an orbifold.  This fact for the topology and, locally, the stratification are more-or-less already known.  We give a local-to-global argument for the stratification in \hypref{t:orbits}{Theorem}, and use codimensions of germs of functions (similar to Milnor numbers) to obtain that the order of a point is an invariant of the differential structure in \hypref{p:milnor}{Proposition}.  From here, a method proved by Haefliger and Ngoc Du \cite{HND} is used to reconstruct the local isotropy groups, and an argument by induction on the dimension of the orbifold is used to reconstruct the charts.

Differential spaces were introduced by Sikorski in 1967 (\cite{sikorski1}, \cite{sikorski2}), and the theory was further developed by many since then, often times under different names (see, for example, Schwarz \cite{schwarz}, \'Sniatycki \cite{sniatycki}, and Aronszajn \cite{aronszajn}).  When dealing with quotient spaces such as orbifolds, the differential structure is induced by the diffeology, which in turn is induced by the corresponding Lie groupoid/stack.  Thus the differential structure is a fairly weak structure in this setting.  It is equivalent to the corresponding Fr\"olicher space structure (see \cite{cherenack}).

The fact that \hyperref[t:gpds]{Theorem A} is true given \hyperref[t:IZKZ]{Theorem B} is \emph{a priori} unexpected.  Indeed, consider orbifolds of the form $X=\RR^n/\Gamma$.  As mentioned above, the differential structure on $X$ is induced by the diffeological structure, but this relationship is definitely not one-to-one when looking at general group actions.  In fact, the differential structure on $\RR^n/O(n)$ is independent of $n$, while the diffeology is dependent on $n$ (see \hypref{x:rotations}{Example}).  What we can conclude from this is that there is something special about the underlying (local) semi-algebraic structure of an orbifold (equipped with its natural differential structure) that allows us to reconstruct the original orbifold atlas.

This paper is broken down as follows.  \hypref{s:differential spaces}{Section} reviews the relevant theory of differential spaces.  \hypref{s:orb diff str}{Section} reviews the definition of an orbifold, defines its differential structure, and develops properties of it.  \hypref{s:strat}{Section} discusses the natural stratification of an orbifold, and here we prove that this stratification is an invariant of the differential structure (\hypref{c:strat}{Corollary}).  In \hypref{s:recover charts}{Section} we prove that the order of a point is an invariant of the differential structure (\hypref{t:point order}{Theorem}), reconstruct the isotropy groups (\hypref{t:HND}{Theorem}), and reconstruct the charts (the proof of the Main Theorem).  \hypref{s:functor}{Section} contains the proof of \hyperref[t:gpds]{Theorem A}.  \hypref{s:diffeology}{Section} contains both proofs of \hyperref[t:IZKZ]{Theorem B}.

Similar unpublished work for orbifolds whose isotropy groups are reflection-free or completely generated by reflections has been done by Moshe Zadka (see the introduction of \cite{IZKZ}), although this is not available as a preprint, and the author has not seen it.

The author wishes to thank Brent Pym, who made the author aware of Milnor numbers, which saved him from ``reinventing the wheel'' (or perhaps something less round).  The author also wishes to thank the University of Illinois at Urbana-Champaign for their support during the development and writing of this paper.

\section{Review of Differential Spaces}\labell{s:differential spaces}

In this section we review the basics of differential spaces, and give relevant examples.  For a more detailed presentation of differential spaces, see \cite{sniatycki} or Section 2.2 of \cite{watts}.

\begin{definition}[Differential Space]\labell{d:diff space}
Let $X$ be a set.  A \emph{(Sikorski) differential structure}
on $X$ is a family
$\mathcal{F}$ of real-valued functions on $X$ satisfying the following two conditions:
\begin{enumerate}
\item\labell{i:smooth compatibility} \textbf{(Smooth Compatibility)} For any positive integer $k$, functions
$f_1,...,f_k\in\mathcal{F}$, and $g\in\CIN(\RR^k)$, the composition
$g(f_1,...,f_k)$ is contained in $\mathcal{F}$.

\item\labell{i:locality} \textbf{(Locality)} Equip $X$ with the weakest topology for which each $f\in\mathcal{F}$ is continuous.  Let $f \colon X\to\RR$ be a function such that there exist an open cover $\{U_\alpha\}$ of $X$ and for each $\alpha$, a function $g_\alpha\in\mathcal{F}$ satisfying $$f|_{U_\alpha}=g_\alpha|_{U_\alpha}.$$  Then $f\in\mathcal{F}$.
\end{enumerate}
The topology in the Locality Condition is called the \emph{functional topology} (or \emph{initial topology}) induced by $\mathcal{F}$.  A set $X$ equipped with a differential structure $\mathcal{F}$ is called
a \emph{(Sikorski) differential space} and is denoted by $(X,\mathcal{F})$.
\end{definition}

\begin{definition}[Functionally Smooth Map]\labell{d:smooth map}
Let $(X,\mathcal{F}_X)$ and $(Y,\mathcal{F}_Y)$ be two differential
spaces.  A map $F \colon X\to Y$ is \emph{functionally smooth}
if $F^*\mathcal{F}_Y\subseteq\mathcal{F}_X$.
The map $F$ is called a \emph{functional diffeomorphism}
if it is a bijection and both it and its inverse are smooth.
\end{definition}

\begin{remark}
Differential spaces along with functionally smooth maps form a category, which we denote by $\diffsp$.  Except for where it would be ambiguous, ``functional'' and ``functionally'' will be dropped henceforth.
\end{remark}

\begin{definition}[Differential Subspace]\labell{d:diff subspace}
Let $(X,\mathcal{F})$ be a differential space, and let $Y\subseteq X$ be
any subset.  Then $Y$ comes equipped with a differential structure $\mathcal{F}_Y$ induced by $\mathcal{F}$ as follows.
A function $f\in\mathcal{F}_Y$ if and only if there is a covering $\{U_\alpha\}$ of $Y$ by open sets of $X$ such that for each $\alpha$, there exists $g_\alpha\in\mathcal{F}$ satisfying $$f|_{U_\alpha\cap Y}=g_\alpha|_{U_\alpha\cap Y}.$$  We call $(Y,\mathcal{F}_Y)$ a \emph{differential subspace} of $X$.  The functional topology on $Y$ induced by $\mathcal{F}_Y$ coincides with the subspace topology on $Y$ (see \cite[Lemma 2.28]{watts}).  If $Y$ is a closed differential subspace of $\RR^n$, then $\mathcal{F}_Y$ is the set of restrictions of smooth functions on $\RR^n$ to $Y$ (see \cite[Proposition 2.36]{watts}).
\end{definition}

\begin{definition}[Subcartesian Space]\labell{d:subcart}
A \emph{subcartesian space} is a paracompact, second-countable, Hausdorff differential space $(S,\CIN(S))$ with an open cover $\{U_\alpha\}$ such that for each $\alpha$, there exist $n_\alpha\in\NN$ and a diffeomorphism $\varphi_\alpha:U_\alpha\to\tilde{U}_\alpha\subseteq\RR^{n_\alpha}$ onto a differential subspace $\tilde{U}_\alpha$ of $\RR^{n_\alpha}$.
\end{definition}

\begin{example}[Some Semi-Algebraic Varieties]\labell{x:varieties1}
Let $k$ be a positive integer.  Define $$S_k:=\{(x,y)\in\RR^2\mid y^2-x^k=0,~x\geq 0\}.$$  Then $S_k$ is a closed differential subspace of $\RR^2$, with a differential structure given by all real-valued functions that extend to smooth functions on $\RR^2$.

Similarly, define $$C_k:=\{(x,y,z)\mid x^2+y^2=z^k,~z\geq 0\}.$$  Then $C_k$ is a closed differential subspace of $\RR^3$, and hence its differential structure is given by restrictions of smooth functions on $\RR^3$. We will encounter these spaces again in later examples.  \eoe
\end{example}

\begin{definition}[Quotient Differential Structure]\labell{d:diff quotient}
Let $(X,\mathcal{F})$ be a differential space, let $\sim$ be an
equivalence relation on $X$, and let $\pi:X \to X/\!\!\sim$
be the quotient map.
Then $X/\!\!\sim$ obtains a differential
structure $\mathcal{F}_\sim$, called the \emph{quotient differential structure},
comprising all functions $f:X/\!\sim\,\to\RR$ each of whose pullback by $\pi$ is in $\mathcal{F}$.  In general, the functional topology generated by $\mathcal{F}_\sim$ is coarser than the quotient topology.
\end{definition}

\begin{example}[Orbit Space]\labell{x:orbitspace}
Let $K$ be a Lie group acting on a manifold $M$.  Then the quotient differential structure on the orbit space $M/K$ consists of all functions each of which pulls back to a $K$-invariant smooth function on $M$.

Continuing this example, if $K$ is a compact group (or if $K$ acts on $M$ properly), then $M/K$ is in fact a subcartesian space.  Indeed, by the local nature of a subcartesian space and the Slice Theorem (\cite{koszul}, \cite{palais}), it is enough to consider $K$ as a subgroup of $O(n)$ acting on $\RR^n$.  By a theorem of Schwarz \cite{schwarz}, the Hilbert map $\sigma=(\sigma_1,...,\sigma_k):\RR^n\to\RR^k$, where $\sigma_1,...,\sigma_k$ is a minimal generating set of the ring of $K$-invariant polynomials, descends to a proper topological embedding of $\RR^n/K$ as a closed subset of $\RR^k$.  Moreover, $\sigma^*(\CIN(\RR^k))=\CIN(\RR^n)^K$, which implies that the quotient differential structure on $\RR^n/K$ is equal to the subcartesian structure induced by $\RR^k$.
\eoe
\end{example}

\section{Orbifolds and their Differential Structures}\labell{s:orb diff str}

We begin this section with the classical definition of an orbifold, based on the presentation in Section 1 of Moerdijk-Pronk \cite{MP}.  We then discuss its natural differential structure.

\begin{definition}[(Effective) Orbifold]\labell{d:orbifold}
Let $X$ be a Hausdorff, paracompact, second-countable topological space.  Fix a non-negative integer $n$.
\begin{enumerate}
\item An $n$-dimension \emph{orbifold chart} on $X$ is a triple $(U,\Gamma,\phi)$ where $U\subseteq\RR^n$ is an open subset, $\Gamma$ is a finite group of diffeomorphisms of $U$, and $\phi$ is a $\Gamma$-invariant map $\phi:U\to X$ that induces a homeomorphism $U/\Gamma\to\varphi(U)$.

\item An \emph{embedding} $\lambda:(U,\Gamma,\phi)\to(V,\Delta,\psi)$ between two charts is a smooth embedding $\lambda:U\to V$ such that $\psi\circ\lambda=\phi$.

\item An $n$-dimensional \emph{orbifold atlas} on $X$ is a family $\mathcal{U}$ of $n$-dimensional orbifold charts that cover $X$ and are locally compatible.  This last condition means that for any two charts $(U,\Gamma,\phi)$ and $( V,\Delta,\psi)$ in $\mathcal{U}$ there is a family of charts $\{( W_\alpha,\Gamma_\alpha,\chi_\alpha)\}$ with embeddings $( W_\alpha,\Gamma_\alpha,\chi_\alpha)\to(U,\Gamma,\phi)$ and $( W_\alpha,\Gamma_\alpha,\chi_\alpha)\to( V,\Delta,\psi)$ for each $\alpha$, and the collection $\{\chi_\alpha( W_\alpha)\}$ forms an open cover of $\phi(U)\cap\psi( V)$.

\item An orbifold atlas $\mathcal{U}$ \emph{refines} another orbifold atlas $\mathcal{V}$ if for any chart in $\mathcal{U}$, there is an embedding of the chart into a chart of $\mathcal{V}$.  If there exists a common refinement of $\mathcal{U}$ and $\mathcal{V}$, then we say that the two atlases are \emph{equivalent}. This forms an equivalence relation on all atlases of $X$.  Each such equivalence class is represented by a maximal atlas.

\item An \emph{(effective) orbifold} $(X,\mathcal{U})$ of dimension $n$ is a Hausdorff, paracompact, second-countable space $X$ equipped with a maximal $n$-dimensional atlas $\mathcal{U}$.

\item Let $(X,\mathcal{U})$ and $(Y,\mathcal{V})$ be orbifolds.  Then a map $F:X\to Y$ is \emph{orbifold smooth} if for any $x\in X$, there exist charts $(U,\Gamma,\phi)$ about $x$ and $( V,\Delta,\psi)$ about $F(x)$ such that $F(\phi(U))\subseteq\psi( V)$ and there exists a smooth map $\tilde{F}:U\to V$ such that $\psi\circ\tilde{F}=F\circ\phi$.  If $F$ is orbifold smooth and invertible with orbifold smooth inverse, then $F$ is an \emph{orbifold diffeomorphism}.
\end{enumerate}
\end{definition}

\begin{remark}\labell{r:orbifold}
\noindent
\begin{enumerate}
\item \labell{i:locally compact} The topology on an orbifold is locally compact, and since it is Hausdorff and second-countable it follows that the topology is also normal.
\item \labell{i:group monomorphism} Let $X$ be an orbifold, and $\lambda:(W,\Delta,\chi)\to(U,\Gamma,\phi)$ an embedding of charts.  Then $\lambda$ induces a group monomorphism $\bar{\lambda}:\Delta\to\Gamma$ such that for any $w\in W$ and $\delta\in \Delta$, $$\lambda(\delta\cdot w)=\bar{\lambda}(\delta)\cdot\lambda(w).$$  Moreover, if $\gamma\in\Gamma$ such that $\lambda(U)\cap \gamma\cdot\lambda(U)\neq\emptyset$, then $\gamma$ is in the image of $\bar{\lambda}$.  In particular, for any $w\in W$, $\lambda$ induces a group isomorphism between the stabiliser of $w$ and that of $\lambda(w)$ (see \cite[Appendix]{MP}).  A similar statement appears as Lemma 17 of \cite{IZKZ}.
\end{enumerate}
\end{remark}

\begin{example}[Reflections and Rotations in the Plane - Part I]\labell{x:varieties2}
Let $D_k$ be the dihedral group of order $2k$. It is generated by $\beta_1$ and $\beta_2$, both of which have order 2, and such that $(\beta_2\beta_1)^k$ is the identity.  $\beta_1$ acts on $\CC\cong\RR^2$ by conjugation $z\mapsto\bar{z}$, and $\beta_2$ by $z\mapsto e^{2\pi i/k}\bar{z}$.  The resulting orbit space $\RR^2/D_k$ is an example of an orbifold.

Similarly, let $\ZZ_k$ be the cyclic group of order $k$.  It is generated by $\alpha$, which has order $k$. It acts on $\CC\cong\RR^2$ by $z\mapsto e^{2\pi i/k}z$.  We obtain the orbifold $\RR^2/\ZZ_k$.
\eoe
\end{example}

\begin{theorem}[A Theorem of Leonardo di Vinci - Finite Group Actions on the Plane]\labell{t:LDV}
Let $\Gamma\subset O(2)$ be a finite group acting orthogonally on the plane.  Then $\Gamma$ is isomorphic as a group to a dihedral group $D_k$ or to a cyclic group $\ZZ_k$.
\end{theorem}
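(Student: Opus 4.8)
The plan is to exploit the determinant homomorphism $\det\colon\O(2)\to\{\pm1\}\cong\ZZ_2$ to separate $\Gamma$ into its rotational part and, when present, a single reflection. Set $\Gamma^+:=\Gamma\cap\SO(2)=\ker\bigl(\det|_\Gamma\bigr)$, a finite subgroup of $\SO(2)$. The whole argument then rests on two elementary facts about $\O(2)$: finite subgroups of $\SO(2)$ are cyclic, and every orientation-reversing element of $\O(2)$ is an order-two reflection that conjugates each rotation to its inverse.

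First I would prove that any finite subgroup $H\subseteq\SO(2)$ is cyclic. Identifying $\SO(2)$ with the group of rotations $R_\theta$ (equivalently with $\RR/2\pi\ZZ$), $H$ corresponds to a finite set of angles in $[0,2\pi)$; let $\theta_0=2\pi/k$ be the smallest strictly positive one. A minimality/division argument shows $H=\langle R_{\theta_0}\rangle\cong\ZZ_k$: if some $R_\theta\in H$ had $\theta\notin\theta_0\ZZ$, then for an appropriate integer $m$ the element $R_{\theta-m\theta_0}\in H$ would be a nontrivial rotation through an angle strictly between $0$ and $\theta_0$, a contradiction. In particular $\Gamma^+\cong\ZZ_k$ with $k=|\Gamma^+|$, generated by $\rho:=R_{2\pi/k}$.

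If $\Gamma=\Gamma^+$ we are done, obtaining $\Gamma\cong\ZZ_k$. Otherwise pick $\beta\in\Gamma$ with $\det\beta=-1$. Every orientation-reversing element of $\O(2)$ has matrix of the form $\mtrx{cc}{\cos\varphi & \sin\varphi \\ \sin\varphi & -\cos\varphi}$, which is symmetric with eigenvalues $\pm1$, hence squares to the identity; thus $\beta^2=1$. Since $\det|_\Gamma$ surjects onto $\{\pm1\}$, the subgroup $\Gamma^+$ has index $2$, so $|\Gamma|=2k$ and $\Gamma=\Gamma^+\cup\Gamma^+\beta=\{\rho^j,\ \rho^j\beta\mid 0\le j<k\}$. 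A direct computation gives $\beta R_\theta\beta^{-1}=R_{-\theta}$ for all $\theta$, hence $\beta\rho\beta^{-1}=\rho^{-1}$. The relations $\rho^k=1$, $\beta^2=1$, $\beta\rho\beta=\rho^{-1}$ together with the element count $|\Gamma|=2k$ show that $\Gamma$ is isomorphic to the dihedral group $D_k$.

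I do not expect a genuine obstacle here: the only step requiring actual work is the cyclicity of finite subgroups of $\SO(2)$, and even that is the standard Archimedean/minimality argument. The remaining care is purely bookkeeping — checking surjectivity of $\det|_\Gamma$ in the reflection case and handling the small cases, where $D_1\cong\ZZ_2$ and $D_2\cong\ZZ_2\times\ZZ_2$, so the two families in the statement overlap but exhaust all possibilities.
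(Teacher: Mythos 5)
Your argument is correct and complete. Where it differs from the paper is that the paper does not actually prove this theorem at all: its ``proof'' is a one-line citation to Weyl's \emph{Symmetry} (pages 66 and 99), which is where the attribution to da Vinci comes from. You instead supply the full classical argument: split $\Gamma$ by the determinant homomorphism $\det\colon\O(2)\to\{\pm1\}$, show that the rotation subgroup $\Gamma^+=\Gamma\cap\SO(2)$ is cyclic via the minimal-positive-angle division argument, and in the orientation-reversing case verify the dihedral relations $\rho^k=1$, $\beta^2=1$, $\beta\rho\beta^{-1}=\rho^{-1}$ together with the index-$2$ count $|\Gamma|=2k$. All the steps check out, including the observation that every orientation-reversing element of $\O(2)$ is an involution and the handling of the degenerate cases $D_1\cong\ZZ_2$ and $D_2$. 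What your version buys is self-containment and an explicit identification of the generators $\rho$ and $\beta$, which is actually what the paper uses downstream (in Example \ref{x:varieties2} and in the reconstruction of two-dimensional charts in the proof of the Main Theorem); what the citation buys is brevity, since the result is entirely standard. One pedantic point: when $\Gamma$ is trivial or when $\Gamma^+$ is trivial you should note that the minimal positive angle $\theta_0$ need not exist, but these cases are handled by $\ZZ_1$ and $D_1$ respectively, as you indicate.
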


\begin{proof}
The cyclic and dihedral groups are the only finite Lie subgroups of $O(2)$.  See for example pages 66 or 99 of \cite{weyl} for a reference attributing this discovery to di Vinci.
\end{proof}

\begin{remark}\labell{r:LDV}
Due to \hypref{t:LDV}{Theorem} and the fact that any finite linear group action on the plane can be transformed equivariant-diffeomorphically into an orthogonal group action (one can always construct an invariant metric) we conclude that any 2-dimensional orbifold locally looks like $\RR^2/D_k$ or $\RR^2/\ZZ_k$ for some $k$.
\end{remark}

\begin{definition}[Isotropy Group]\labell{d:isotropy group}
Let $X$ be an orbifold of dimension $n$ and let $x\in X$.  Then an \emph{isotropy group} of $X$ at $x$ is a finite subgroup $\Gamma_x$ of $\GL(\RR^n)$ such that there exists a chart $(\RR^n,\Gamma_x,\phi)$ satisfying $\phi(0)=x$.
\end{definition}

\begin{remark}\labell{r:isotropy group}
An isotropy group exists at every point $x\in X$, and can be obtained using the Slice Theorem.  It is unique up to conjugation in $\GL(\RR^n)$ (see \cite[pages 39-40]{MM}).  Moreover, we may assume that $\Gamma_x\in O(n)$ if needed.
\end{remark}

\mute{

\comment{No longer need this, but keeping in anyway.}

\begin{lemma}\labell{l:isotropy group}
Let $X$ be an orbifold of dimension $n$ and let $x\in X$.  Then an isotropy group $\Gamma_x$ exists and is unique up to conjugation in $\GL(\RR^n)$.
\end{lemma}

\begin{proof}
Let $(U,\Gamma,\phi)$ be a chart of $X$ such that $x\in\phi(U)$.  Let $y\in U$ such that $\phi(y)=x$.  Let $\Gamma_x=\Stab_\Gamma(y)$.  Note that, up to conjugation in $\Gamma$, the group $\Gamma_x$ is independent of the choice of $y$. Identify $T_yU$ with $\RR^n$, equipped with the isotropy action of $\Gamma_x$.  By the slice theorem, there is a $\Gamma$-invariant open neighbourhood $V$ of $y$ and an equivariant diffeomorphism from $V$ to $\Gamma\times_{\Gamma_x}\RR^n$, where $\Gamma_x$ acts on $\Gamma\times\RR^n$ via the anti-diagonal action $$\eta\cdot(\gamma,v)=(\gamma\eta^{-1},\eta\cdot v).$$  Since the diffeomorphism is equivariant, it preserves strata of the orbit-type stratification.  Consider the map $i:\RR^n\to\Gamma\times_{\Gamma_x}\RR^n$ sending $v$ to $[e,v]$ where $e$ is the identity in $\Gamma$.  This map is $\Gamma_x$-equivariant (with respect to the restricted action of $\Gamma_x$ on $\Gamma\times_{\Gamma_x}\RR^n$), and thus $\Stab_{\Gamma_x}(v)\subseteq\Stab_\Gamma([e,v])$.  On the other hand, let $\gamma\in\Stab_\Gamma([e,v])$, that is, $$\gamma\cdot[e,v]=[\gamma,v]=[e,v].$$  Then, there exists $\eta\in\Gamma_x$ such that $(\gamma,v)=(\eta^{-1},\eta\cdot v)$.  Thus, $\eta^{-1}=\gamma\in \Gamma_x$ and $\gamma\in\Stab_{\Gamma_x}(v)$.  We have shown that $\Stab_{\Gamma_x}(v)=\Stab_\Gamma([e,v])$.  Since each orbit-type stratum of $\Gamma\times_{\Gamma_x}\RR^n$ intersects the image of $i$, we conclude that $i$ induces a one-to-one correspondence on the orbit-type strata in $\RR^n$ and $\Gamma\times_{\Gamma_x}\RR^n$; the stratum containing $[\gamma,v]$ equals $\Gamma\cdot i(M)$, where $M$ is the stratum containing $v$ in $\RR^n$.

Next, since $i$ is $\Gamma_x$-equivariant, it descends to a smooth map $j:\RR^n/\Gamma_x\to(\Gamma\times_{\Gamma_x}\RR^n)/\Gamma$, which is also a bijection.  In fact, $j$ is a diffeomorphism.  Indeed, let $f\in\CIN(\RR^n/{\Gamma_x})$.  Then, there exists $\tilde{f}\in\CIN(\RR^n)^{\Gamma_x}$ that descends to $f$.  Since $\tilde{f}$ is $\Gamma_x$-invariant, the pullback $\pr_2^*\tilde{f}\in\CIN(\Gamma\times\RR^n)$ is $\Gamma_x$-invariant with respect to the anti-diagonal action.  Thus, $\pr_2^*\tilde{f}$ descends to a function $\tilde{g}\in\CIN(\Gamma\times_{\Gamma_x}\RR^n)$.  For any $[\gamma,v]\in\Gamma\times_{\Gamma_x}\RR^n$, we have $$\tilde{g}([\gamma,v])=\tilde{f}(v).$$  Thus, $i^*\tilde{g}=\tilde{f}$ and $\tilde{g}$ is $\Gamma$-invariant.  Hence $\tilde{g}$ descends to a function $g\in\CIN((\Gamma\times_{\Gamma_x}\RR^n)/\Gamma)$ such that $j^*g=f$.  This establishes the smoothness of $j^{-1}$.  Moreover, from above we have that $j$ is strata-preserving as well.

Thus, we have a strata-preserving diffeomorphism $V/\Gamma\to\RR^n/\Gamma_x$.  Hence, we have a chart $(\RR^n,\Gamma_x,\psi)$ and an embedding $\lambda:(\RR^n,\Gamma_x,\psi)\to(U,\Gamma,\phi)$ with $\lambda(\RR^n)=V\subseteq U$. This proves the existence of an isotropy group.

Next, let $(\RR^n,\Delta_x,\psi')$ be another chart with $\psi'(0)=x$.  Then, there is a chart $(W,H,\chi)$ and embeddings $\lambda_1:(W,H,\chi)\to(\RR^n,\Gamma_x,\psi)$ and $\lambda_2:(W,H,\chi)\to(\RR^n,\Delta_x,\psi')$ with a point $w\in W$ such that $\lambda_1(w)=\lambda_2(w)=0$.  Let $K$ be the stabiliser of $w$.  By \hypref{i:group monomorphism}{Item} of \hypref{r:orbifold}{Remark}, $d\lambda_1|_w$ is an equivariant diffeomorphism from $T_wW$ equipped with the isotropy action of $K$ to $\RR^n$.  Moreover, identifying $T_wW$ with $\RR^n$, we have that $d\lambda_1|_w\in\GL(\RR^n)$.  Thus, $K$ is conjugate to $\Gamma_x$ in $GL(\RR^n)$.  Similarly, $K$ is conjugate to $\Delta_x$ in $GL(\RR^n)$, and so $\Gamma_x$ and $\Delta_x$ are conjugate to each other.
\end{proof}

}

\begin{definition}[Differential Structure on an Orbifold]\labell{d:diff orbifold}
Let $X$ be an $n$-dimensional orbifold.  Then the \emph{orbifold differential structure} $\CIN(X)$ on $X$ is given by real-valued functions $f:X\to\RR$ satisfying the following: given a chart $(U,\Gamma,\phi)$ of $X$, there exists a smooth $\Gamma$-invariant function $g_U:U\to\RR$ such that $g_U=\phi^*f$.
\end{definition}

\begin{proposition}[Properties of the Orbifold Differential Structure]\labell{p:diff orb}
Let $X$ be an orbifold.
\begin{enumerate}
\item \labell{i:topology} The corresponding functional topology on $X$ equals the orbifold topology.
\item \labell{i:rings} $\CIN(X)$ equals the ring of orbifold smooth functions.
\item \labell{i:orb is subcart} $(X,\CIN(X))$ is subcartesian.
\end{enumerate}
\end{proposition}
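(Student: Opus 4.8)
The plan is to handle the four items in order, establishing in item~(1) a ``consistency'' fact that items~(2)--(4) will reuse. For item~(1), I first check the two axioms of \hypref{d:diff space}{Definition}. Smooth compatibility is immediate: for $f_1,\dots,f_k\in\CIN(X)$, $g\in\CIN(\RR^k)$, and any chart $(U,\Gamma,\phi)$, one has $\phi^*\bigl(g(f_1,\dots,f_k)\bigr)=g(\phi^*f_1,\dots,\phi^*f_k)$, a smooth function of smooth $\Gamma$-invariant functions, hence smooth and $\Gamma$-invariant. Locality is equally direct: if $f|_{O_\beta}=g_\beta|_{O_\beta}$ for $g_\beta\in\CIN(X)$ and an open cover $\{O_\beta\}$, then on the open set $\phi^{-1}(O_\beta)\subseteq U$ the pullback $\phi^*f$ equals the smooth $\Gamma$-invariant function $\phi^*g_\beta$, so $\phi^*f$ is locally smooth, hence smooth, and it is $\Gamma$-invariant because $\phi$ is. The substantive content of ``well-defined'' is that the lifting condition in \hypref{d:diff orbifold}{Definition} need only be tested on a single atlas, not on the full maximal atlas: if $f\colon X\to\RR$ pulls back to a smooth invariant function on every chart of some atlas $\mathcal V$, then the same holds on every chart $(V,\Delta,\psi)$ of the maximal atlas. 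This I would prove by fixing $v\in V$, choosing a chart $(U,\Gamma,\phi)\in\mathcal V$ with $\psi(v)\in\phi(U)$, and using local compatibility to get a chart $(W,\Gamma_W,\chi)$ embedding into both $(U,\Gamma,\phi)$ and $(V,\Delta,\psi)$ with $\psi(v)\in\chi(W)$; pulling back $\phi^*f$ along the first embedding (which intertwines $\Gamma_W$ with a subgroup of $\Gamma$ by \hypref{i:group monomorphism}{Item} of \hypref{r:orbifold}{Remark}) shows $\chi^*f$ is smooth and $\Gamma_W$-invariant, and transporting along the second embedding (which is open, both charts having dimension $n$) shows $\psi^*f$ is smooth on the $\Delta$-saturation of the image; since these saturations cover $V$ as $v$ varies and $\psi^*f$ is automatically $\Delta$-invariant, $\psi^*f$ is the required lift.

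For item~(2), every $f\in\CIN(X)$ is orbifold-continuous since $\phi^*f$ is continuous and $\phi$ is a topological quotient onto the open set $\phi(U)$, and these images cover $X$; hence the functional topology is coarser than the orbifold topology. For the reverse inclusion, given an orbifold-open $O$ and $x\in O$, I would produce $f\in\CIN(X)$ with $f(x)\neq0$ and $\{f\neq0\}\subseteq O$: take a chart $(U,\Gamma,\phi)$ about $x$, average an ordinary bump function on $U$ over $\Gamma$ to get a $\Gamma$-invariant bump equal to $1$ near $\phi^{-1}(x)$ with support small enough to descend to a function supported in $\phi(U)\cap O$, then extend by zero; locality together with item~(1) places this in $\CIN(X)$, and $\{f\neq0\}$ is a functionally-open neighbourhood of $x$ contained in $O$. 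Item~(3) is then essentially a reformulation: taking the target chart of $\RR$ to be $(\RR,\{e\},\id)$, orbifold smoothness of $F\colon X\to\RR$ says that around each point there is a chart $(U,\Gamma,\phi)$ with $\phi^*F$ smooth (and automatically $\Gamma$-invariant), which by item~(1) is equivalent to $F\in\CIN(X)$; both sides carry the pointwise ring operations, so they agree as rings.

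For item~(4), $X$ is Hausdorff, paracompact, and second-countable by \hypref{d:orbifold}{Definition}, and by item~(2) this holds in the functional topology, so it remains to supply a local model. Using \hypref{r:isotropy group}{Remark} I would cover $X$ by images of isotropy charts $(\RR^n,\Gamma_x,\phi)$ with $\Gamma_x\subset\O(n)$ (replacing the inner product by a $\Gamma_x$-invariant one). A bump-function argument as in item~(2) identifies the differential subspace structure $\CIN(X)|_{\phi(\RR^n)}$ with the quotient differential structure on $\RR^n/\Gamma_x$, and \hypref{x:orbitspace}{Example} (Schwarz's theorem and the Hilbert map $\sigma$) exhibits the latter as a closed differential subspace of some $\RR^k$ via a functional diffeomorphism. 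Thus every point of $X$ has an open neighbourhood functionally diffeomorphic to a differential subspace of a Euclidean space, which with the topological properties above gives that $(X,\CIN(X))$ is subcartesian.

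The one genuinely delicate step is the consistency claim in item~(1): propagating the lifting condition from an atlas to the maximal atlas requires chasing the embedding lemma and keeping track of how the finite groups sit inside one another. Once that is done, items~(2)--(4) are bookkeeping built on bump functions and Schwarz's theorem.
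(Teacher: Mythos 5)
Your proof is correct and follows essentially the same route as the paper's (which is far terser): reduce well-definedness to compatibility of lifts under chart embeddings, identify the topology and the local models via invariant bump functions together with the Schwarz/Hilbert embedding of \hypref{x:orbitspace}{Example}, and observe that item (3) is definitional. The extra detail you supply --- verifying the two axioms of \hypref{d:diff space}{Definition} and propagating the lifting condition from a single atlas to the maximal one --- is precisely what the paper's phrases ``immediate from the definition of an embedding'' and ``immediate from the definitions'' are suppressing.
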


\begin{proof}
\noindent
\begin{enumerate}
\item A basis for the topology on $X$ induced by its orbifold structure is given by the union over all charts $(U,\Gamma,\phi)$ of each quotient topology on $\phi(U)$.  But by \hypref{x:orbitspace}{Example} and \hypref{d:diff subspace}{Definition}, this is also a basis for the topology induced by $\CIN(X)$.
\item This is immediate from the definitions.
\item This is a direct consequence of \hypref{x:orbitspace}{Example} and \hypref{d:diff orbifold}{Definition}.
\end{enumerate}
\end{proof}

\begin{remark}\labell{r:diff orb}
It follows from \hypref{p:diff orb}{Proposition} that the orbifold differential structure only depends on the natural differential structure of the (local) semi-algebraic variety underlying the orbifold.
\end{remark}

\begin{example}[Reflections and Rotations in the Plane - Part II]\labell{x:varieties3}
Continuing example \hypref{x:varieties2}{Example}, a minimal generating set for the ring of $D_k$-invariant real polynomials on $\CC\cong\RR^2$ is given by $\{\delta_1,\delta_2\}$ where $\delta_1(z)=|z|^2$ and $\delta_2(z)=\Re(z^k)$.  The resulting orbifold embeds into $\RR^2$ as the semi-algebraic variety $$R_k:=\{(s,t)\mid t^2\leq s^{k},~s\geq 0\}.$$

Similarly, a minimal generating set for the ring of $\ZZ_k$-invariant real polynomials on $\CC\cong\RR^2$ is given by $\{\sigma_1,\sigma_2,\sigma_3\}$ where $\sigma_1=\Re(z^k)$, $\sigma_2=\Im(z^k)$, and $\sigma_3=|z|^2$.  The resulting orbifold embeds into $\RR^3$ as the semi-algebraic variety $$C_k:=\{(s,t,u)\mid s^2+t^2=u^k,~u\geq 0\}.$$  This is the same differential subspace $C_k$ introduced in \hypref{x:varieties1}{Example}.
\eoe
\end{example}

\section{The Stratification of an Orbifold}\labell{s:strat}

In this section, we review stratified spaces from the perspective of subcartesian spaces.  For more details see Chapter 4 of  \cite{sniatycki} and \cite{LS}.  For a general introduction to stratified spaces, see \cite{pflaum}.  The main results of this section are \hypref{t:orbits}{Theorem} and \hypref{c:strat}{Corollary}.  The theorem states that the orbifold stratification is induced by the family of vector fields on the orbifold, which uses the theory of vector fields on subcartesian spaces developed by \'Sniatycki (see \cite{sniatycki}).  The corollary uses the fact that the family of vector fields of a subcartesian space is an invariant of the differential structure, and thus the orbifold stratification is an invariant of the orbifold differential structure.

\begin{definition}[Smooth Stratification]\labell{d:strat1}
Let $S$ be a subcartesian space.  Then a \emph{smooth stratification} of $S$ is a locally finite partition $\mathcal{M}$ of $S$ into locally closed and connected (embedded) submanifolds $M$, called the \emph{strata} of $\mathcal{M}$, which satisfy the following \emph{frontier condition}.

\begin{quote}
(\textbf{Frontier Condition:}) For any $M$ and $M'$ in $\mathcal{M}$, if $M'\cap\overline{M}\neq\emptyset$, then either $M=M'$ or $M'\subseteq\overline{M}\smallsetminus M$.
\end{quote}
\end{definition}

\begin{example}[Orbit-Type Stratification - Part I]\labell{x:ots}
Let $K$ be a Lie group acting properly on a manifold $M$.  Define for any closed subgroup $H$ of $K$ the subset of \emph{orbit-type} $(H)$ by $$M_{(H)}:=\{x\in M\mid \exists k\in K \text{ such that }\Stab_K(x)=kHk^{-1}\}.$$  Then the collection of all connected components of all (non-empty) subsets $M_{(H)}$ form a smooth stratification of $M$, called the \emph{orbit-type stratification} (see \cite[Theorem 2.7.4]{DK}).  Moreover, this stratification descends via the quotient map $\pi:M\to M/K$ to a smooth stratification on $M/K$, in which the strata are the connected components of $\pi(M_{(H)})$ as $H$ runs over closed subgroups of $K$ such that $M_{(H)}$ is non-empty (see \cite[Theorem 4.3.5]{sniatycki}).
\eoe
\end{example}

\begin{definition}[Orbifold Stratification]\labell{d:strat2}
Let $X$ be an orbifold.  Then $X$ admits a stratification, called the \emph{orbifold stratification} given locally as follows.  Let $(U,\Gamma,\phi)$ be a chart.  Then the orbit-type stratification on $U$ descends to a stratification on $U/\Gamma$ and hence on $\phi(U)$.
\end{definition}

\begin{lemma}[Orbifold Stratification is Well-Defined]\labell{l:strat}
Given an orbifold $X$, the orbifold stratification is independent of the charts of $X$; that is, it is well-defined.  Moreover, it is a smooth stratification in the sense of \hypref{d:strat1}{Definition}.
\end{lemma}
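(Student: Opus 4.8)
The plan is to establish two things: first, that the orbifold stratification is independent of the choice of chart (well-definedness), and second, that the resulting partition of $X$ into strata is a smooth stratification in the sense of \hypref{d:strat1}{Definition}, i.e. the strata are locally closed connected submanifolds forming a locally finite partition and satisfying the frontier condition. For well-definedness, the key point is \hypref{i:group monomorphism}{Item} of \hypref{r:orbifold}{Remark}: given an embedding $\lambda\colon(W,\Delta,\chi)\to(U,\Gamma,\phi)$, for each $w\in W$ the differential $d\lambda|_w$ carries a $\Stab_\Delta(w)$-invariant neighbourhood equivariantly onto a $\Stab_\Gamma(\lambda(w))$-invariant neighbourhood, and induces a group isomorphism between the two stabilisers. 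Since the orbit-type stratification on a linear slice is determined by the conjugacy classes of stabilisers, $\lambda$ is strata-preserving; hence the locally-defined stratifications on overlapping chart images $\phi(U)$ and $\psi(V)$ agree on their common refinements, and by the local compatibility condition in the definition of an orbifold atlas (\hypref{d:orbifold}{Definition}) they glue to a well-defined global partition of $X$.

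For the second assertion, I would argue locally, since the conditions defining a smooth stratification are local. Fix a chart $(U,\Gamma,\phi)$; by \hypref{r:isotropy group}{Remark} we may assume $\Gamma$ acts orthogonally, so this reduces to the case of a finite (hence proper) group action $\Gamma$ on an open subset of $\RR^n$. Then \hypref{x:ots}{Example} (Orbit-Type Stratification) directly applies: the orbit-type stratification on $U$ is a smooth stratification, and it descends via $\pi\colon U\to U/\Gamma$ to a smooth stratification of $U/\Gamma\cong\phi(U)$, with strata the connected components of the $\pi(U_{(H)})$. This gives local finiteness of the partition, the fact that each stratum is a (locally closed, connected) submanifold, and the frontier condition on $\phi(U)$. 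Patching these local statements over an atlas (using well-definedness to know the pieces agree on overlaps) yields that the orbifold stratification is a smooth stratification of $X$; local finiteness globally follows from local finiteness on each chart together with the paracompactness of $X$ built into \hypref{d:orbifold}{Definition}.

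The main obstacle is the well-definedness step, specifically verifying carefully that embeddings of charts preserve orbit-types. One must check that the stabiliser isomorphism from \hypref{r:orbifold}{Remark} is not merely an abstract isomorphism but identifies the two isotropy representations up to conjugacy in the ambient general linear group, so that orbit-type strata (which depend on conjugacy classes of stabiliser subgroups acting linearly) correspond. This is where the identification $T_wW\cong\RR^n$ and the fact that $d\lambda|_w\in\GL(\RR^n)$ intertwines the two linear actions does the work — essentially the argument sketched in the (muted) lemma on isotropy groups in the excerpt. Once that is in hand, everything else is a matter of invoking \hypref{x:ots}{Example} and assembling local data, which is routine.
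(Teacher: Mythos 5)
Your proposal follows essentially the same route as the paper: invoke \hypref{x:ots}{Example} for the local smooth stratification on each $\phi(U)$, and use the stabiliser isomorphism from \hypref{i:group monomorphism}{Item} of \hypref{r:orbifold}{Remark} to show that chart embeddings preserve orbit types, then glue. The only point the paper makes more explicit is the reverse direction --- that the $\Gamma$-stabiliser $H$ of a point of $\lambda(W)$ lies in $\bar{\lambda}(\Delta)$, so that $U_{(H)}\cap\lambda(W)=\lambda(W)_{(H)}$ --- which is exactly the subtlety you flag at the end, so the argument is in agreement.
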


\begin{proof}
For any chart $(U,\Gamma,\phi)$ of $X$, the orbit-type stratification on $U$ descends to a smooth stratification on $\phi(U)$ (see \hypref{x:ots}{Example}).  Since the conditions of a stratification are local, we can construct a global stratification by piecing together the stratifications on each open set $\phi(U)$ for each chart $(U,\Gamma,\phi)$.  We only need to show that this stratification is independent of the chart.

To this end, let $n$ be the dimension of $X$.  Fix two charts $(U,\Gamma,\phi)$ and $(W,\Delta,\psi)$ such that there is an embedding $\lambda:(W,\Delta,\psi)\to(U,\Gamma,\phi)$.  We want to show that the strata of $\psi(W)$ match up with those of $\phi(U)$ via the inclusion $\psi(W)\subseteq\phi(U)$.  To accomplish this, it is enough to show that $\lambda$ induces a one-to-one correspondence between the orbit-type strata on $W$ and the connected components of the intersection of orbit-type strata of $U$ with $\lambda(W)$.

By \hypref{i:group monomorphism}{Item} of \hypref{r:orbifold}{Remark}, there is a group monomorphism $\bar{\lambda}:\Delta\to\Gamma$ such that $\lambda(\delta\cdot w)=\bar{\lambda}(\delta)\cdot \lambda(w)$ for all $\delta\in\Delta$, and for any $w\in W$ we have that $\bar{\lambda}$ induces a group isomorphism between $\Stab_\Delta(w)$ and $\Stab_\Gamma(\lambda(w))$.  It follows that $\lambda$ preserves orbit-types, and since $\lambda$ is continuous and continuous maps preserve connectedness, we have that $\lambda$ maps strata into connected components of the orbit-type strata of $U$ that intersect $\lambda(W)$.

Since $\lambda^{-1}:\lambda(W)\to W$ is also an embedding, we have that it maps strata of the $\bar{\lambda}(\Delta)$-action on $\lambda(W)$ into strata of $W$.  Let $u\in U_{(H)}\cap\lambda(W)$ with stabiliser $H\subseteq\Gamma.$  By \hypref{i:group monomorphism}{Item} of \hypref{r:orbifold}{Remark}, $H$ must be a subgroup of $\bar{\lambda}(\Delta)$, and it is the stabiliser of $u$ with respect to the action of $\bar{\lambda}(\Delta)$.  We conclude that $U_{(H)}\cap\lambda(W)=\lambda(W)_{(H)}$, and this completes the proof.

\end{proof}

\begin{remark}\labell{r:open dense stratum}
Given an orbifold $X$ with a chart $(U,\Gamma,\phi)$ in which $U$ is connected, there is an open, dense stratum of $\phi(U)$, the codimension-0 stratum.  The union of all of these yield an open and dense codimension-0 stratum of $X$, which is a manifold whose dimension equals the dimension of $X$.  Note that the dimension of $X$ is thus a topological invariant of it.  Indeed, the topological dimension at almost every $x\in X$ is equal to the dimension of $X$.
\end{remark}

\begin{definition}[Refinements and Minimality]\labell{d:refinement}
Let $S$ be a subcartesian space, and let $\mathcal{M}$ and $\mathcal{M}'$ be smooth stratifications on it.  Then $\mathcal{M}$ is said to \emph{refine} $\mathcal{M}'$ if for every $M\in\mathcal{M}$, there exists $M'\in\mathcal{M}'$ such that $M\subseteq M'$.  If $\mathcal{M}$ is not a refinement of any other smooth stratification on $S$, then we say that $\mathcal{M}$ is \emph{minimal}.
\end{definition}

\begin{example}[Orbit-Type Stratification - Part II]\labell{x:ots2}
Let $K$ be a non-trivial Lie group acting properly and effectively on a manifold $M$.  Then the orbit-type stratification on $M$ is not generally minimal (as the set of connected components of $M$ itself refines it).  On the other hand, the induced stratification on $M/K$ is minimal.  This is a result of Bierstone (see \cite{bierstone1}, \cite{bierstone2}).
\end{example}

\begin{definition}[Smooth Local Triviality]\labell{d:triviality}
Let $S$ be a subcartesian space, and let $\mathcal{M}$ be a smooth stratification on $S$.  Then $S$ is \emph{smoothly locally trivial} if for every $M\in\mathcal{M}$ and $x\in M$,
\begin{enumerate}
\item there is an open neighbourhood $U$ of $x$ such that the partition of $U$ into manifolds $N\cap U$ ($N\in\mathcal{M}$) yields a stratification of $U$,
\item there exists a subcartesian space $S'$ with smooth stratification $\mathcal{M}'$ which contains a singleton set $\{y\}\subseteq\mathcal{M}'$,
\item there exists a strata-preserving diffeomorphism $\varphi:U\to (M\cap U)\times S'$ sending $x$ to $(x,y)$.
\end{enumerate}
Note that the strata of $(M\cap U)\times S'$ are the sets $(M\cap U)\times M'$ where $M'\in\mathcal{M}'$.
\end{definition}

\begin{lemma}\labell{l:triviality}
Let $X$ be an orbifold.  Then the orbifold stratification on $X$ is smoothly locally trivial.
\end{lemma}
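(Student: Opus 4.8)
The plan is to reduce the global statement to a local model computation by exploiting the fact that smooth local triviality is a local condition on the stratification. By \hypref{d:strat2}{Definition} and \hypref{l:strat}{Lemma}, the orbifold stratification on a chart $(U,\Gamma,\phi)$ is the descent of the orbit-type stratification of the $\Gamma$-action on $U$. So it suffices, given $x\in X$ lying in a stratum $M$, to produce a chart around $x$ and verify conditions (1)--(3) of \hypref{d:triviality}{Definition} inside $\phi(U/\Gamma)$. Using \hypref{r:isotropy group}{Remark}, I would choose a chart of the form $(\RR^n,\Gamma_x,\phi)$ with $\phi(0)=x$ and $\Gamma_x\subseteq O(n)$ a finite subgroup acting orthogonally; this is the model I will analyze.

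The key step is the standard orthogonal decomposition for a linear action of a finite group. Write $\RR^n = V^{\Gamma_x}\oplus W$ where $V^{\Gamma_x}$ is the fixed-point subspace and $W$ its $\Gamma_x$-invariant orthogonal complement, on which $\Gamma_x$ acts with no nonzero fixed vector. The point $0$ lies in the stratum of $\RR^n/\Gamma_x$ that is the image of $V^{\Gamma_x}$ (the principal stratum through $0$ in a neighbourhood of $0$, corresponding to the orbit-type of the fixed set), and this image is a manifold diffeomorphic to $V^{\Gamma_x}$. The product decomposition $\RR^n/\Gamma_x \cong V^{\Gamma_x}\times(W/\Gamma_x)$ is then immediate because $\Gamma_x$ acts trivially on the first factor, and this diffeomorphism is strata-preserving since orbit-types split as products: $\Stab_{\Gamma_x}(v,w) = \Stab_{\Gamma_x}(w)$ for $(v,w)\in V^{\Gamma_x}\times W$. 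Taking $S' = W/\Gamma_x$ with its orbit-type stratification (which contains the singleton $\{[0]\}$ as the stratum of orbit-type $(\Gamma_x)$, the most singular one), and $M\cap U$ locally the image of $V^{\Gamma_x}$, gives exactly the data required. Finally, restricting the diffeomorphism $\RR^n/\Gamma_x\cong V^{\Gamma_x}\times(W/\Gamma_x)$ to a small enough neighbourhood $U$ of $x$ in $\phi(\RR^n/\Gamma_x)$ and intersecting with $M$ yields conditions (1)--(3); condition (1) follows since the restriction of a stratification to an open set is a stratification, and the diffeomorphism in (3) sends $x\mapsto(x,[0])$ by construction.

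I should also check that $W/\Gamma_x$ is genuinely subcartesian with a smooth stratification, but this is guaranteed by \hypref{x:orbitspace}{Example} and \hypref{x:ots}{Example} applied to the proper (indeed finite) action of $\Gamma_x$ on $W$. The main obstacle is not any single deep point but rather bookkeeping: one must verify that the orbifold stratification on $\phi(\RR^n/\Gamma_x)$ near $x$ really does coincide, stratum by stratum, with the product stratification on $V^{\Gamma_x}\times(W/\Gamma_x)$ — i.e. that the stratum of $x$ in the orbifold is locally the image of $V^{\Gamma_x}$, not something smaller. This follows from the fact that in a neighbourhood of $0$ every point of $V^{\Gamma_x}$ has stabiliser exactly $\Gamma_x$ (the fixed set is closed and $0$ is isolated among orbit-types $\supsetneq$ no larger group can fix more), so $V^{\Gamma_x}$ lies in a single orbit-type stratum through $0$, and the product structure respects the decomposition into connected components of orbit-type sets. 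Once that identification is in place, conditions (1)--(3) are formal.
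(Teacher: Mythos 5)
Your proof is correct, and it follows the same initial reduction as the paper (localize to a chart of the form $(\RR^n,\Gamma_x,\phi)$ with $\Gamma_x\subset O(n)$ acting orthogonally, via \hypref{r:isotropy group}{Remark}); the difference is what happens next. The paper stops at that point and invokes Lemma 4.3.6 of \cite{sniatycki}, whereas you carry out the argument yourself via the orthogonal splitting $\RR^n=V^{\Gamma_x}\oplus W$ into the fixed subspace and its invariant complement, the identification $\RR^n/\Gamma_x\cong V^{\Gamma_x}\times(W/\Gamma_x)$, and the observation that $\Stab_{\Gamma_x}(v,w)=\Stab_{\Gamma_x}(w)$ so that orbit-types split as products and $\{[0]\}$ is the required singleton stratum of $S'=W/\Gamma_x$ (since $W^{\Gamma_x}=0$). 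This is in substance exactly the content of the cited lemma, so what your version buys is self-containedness at the cost of length; the paper's version buys brevity at the cost of sending the reader to a reference. Two small bookkeeping points you gesture at but should nail down: (i) to get condition (1) of \hypref{d:triviality}{Definition} with \emph{connected} pieces $M\cap U$, take $U$ to be the image of a product neighbourhood $B_V\times B_W$ of $0$, so that the product decomposition restricts cleanly and each $M\cap U$ is connected; and (ii) your parenthetical justification that the stratum through $x$ is locally the image of $V^{\Gamma_x}$ can be simplified --- every point of $V^{\Gamma_x}$ has stabiliser exactly $\Gamma_x$ by definition of the fixed subspace, and no point outside $V^{\Gamma_x}\times\{0\}$ does, so the orbit-type set of $(\Gamma_x)$ is precisely $V^{\Gamma_x}\times\{0\}$ with no need for an ``isolated among orbit-types'' argument.
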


\begin{proof}
Since it is enough to prove this locally, we may focus on a chart $(U,\Gamma,\phi)$ of $X$.  By \hypref{r:isotropy group}{Remark}, we may assume th	at $U=\RR^n$, on which $\Gamma$ acts orthogonally.  Thus, we may apply the result Lemma 4.3.6 of \cite{sniatycki}.
\end{proof}

\begin{definition}[Tangent Bundles and Global Derivations]\labell{d:tangent}
Let $S$ be a subcartesian space.
\begin{enumerate}
\item Given a point $x\in S$, a \emph{derivation} of $\CIN(S)$ at $x$ is a linear map $v:\CIN(S)\to\RR$ that satisfies Leibniz' rule: for all $f,g\in\CIN(S)$, $$v(fg)=f(x)v(g)+g(x)v(f).$$  The set of all derivations of $\CIN(S)$ at $x$ forms a vector space, called the \emph{(Zariski) tangent space} of $x$, and is denoted $T_xS$. Define the \emph{(Zariski) tangent bundle} $TS$ to be the (disjoint) union $$TS:=\bigcup_{x\in S}T_xS.$$  Denote the canonical projection $TS\to S$ by $\tau$.
\item A \emph{(global) derivation} of $\CIN(S)$ is a linear map $Y:\CIN(S)\to\CIN(S)$ that satisfies Leibniz' rule: for any $f,g\in\CIN(S)$, $$Y(fg)=fY(g)+gY(f).$$  Denote the $\CIN(S)$-module of all derivations by $\Der\CIN(S)$.
\item Fix $Y\in\Der\CIN(S)$ and $x\in S$.  An \emph{integral curve} $\exp(\cdot Y)(x)$ of $Y$ through $x$ is a smooth map from a connected subset $I^Y_x\subseteq\RR$ containing 0 to $S$ such that $\exp(0Y)(x)=x$, and for all $f\in\CIN(S)$ and $t\in I^Y_x$ we have $$\frac{d}{dt}(f\circ\exp(tY)(x))=(Yf)(\exp(tY)(x)).$$  An integral curve is \emph{maximal} if $I_x^Y$ is maximal among the domains of all such curves.  We adopt the convention that the map $c:\{0\}\to S:0\mapsto c(0)$ is an integral curve of every global derivation of $\CIN(S)$.
\end{enumerate}
\end{definition}

\begin{remark}\labell{r:zariski}
\noindent
\begin{enumerate}
\item \labell{i:tau smooth} $TS$ is a subcartesian space with its differential structure generated by functions $f\circ\tau$ and $df$ where $f\in\CIN(S)$ and $d$ is the differential $df(v):=v(f)$.  The projection $\tau$ is smooth with respect to this differential structure (see \cite[page 4]{LSW} or \cite[Proposition 3.3.3]{sniatycki}).
\item \labell{i:str dim invt} Given $x\in S$, the dimension of $T_xS$ is invariant under diffeomorphism: if $\varphi:S\to R$ is a diffeomorphism of differential spaces, then $R$ is a subcartesian space, and the dimension of $T_{\varphi(x)}R$ is equal to that of $T_xS$.  Indeed, it is not hard to show that the pushforward $\varphi_*:TS\to TR$ sending $v\in T_xS$ to $\varphi_*v\in T_{\varphi(x)}R$ is a linear isomorphism on each tangent space.  (Recall that for any $f\in\CIN(R)$, we have $\varphi_*v(f)=v(f\circ\varphi).$)
\item Global derivations of $\CIN(S)$ are exactly the smooth sections of $\tau:TS\to S$ (see \cite[Proposition 3.3.5]{sniatycki}).
\item Let $Y\in\Der\CIN(S)$.  Then, for any $x\in S$, there exists a unique maximal integral curve $\exp(\cdot Y)(x)$ through $x$ (see \cite[Theorem 3.2.1]{sniatycki}).
\end{enumerate}
\end{remark}

\begin{definition}[Vector Fields and their Orbits]\labell{d:vector fields}
Let $S$ be a subcartesian space.
\begin{enumerate}
\item Let $D$ be a subset of $\RR\times S$ containing $\{0\}\times S$ such that $D\cap(\RR\times\{x\})$ is connected for each $x\in S$.  A map $\phi:D\to S$ is a \emph{local flow} if $D$ is open, $\phi(0,x)=x$ for each $x\in S$, and $\phi(t,\phi(s,x))=\phi(t+s,x)$ for all $x\in S$ and $s,t\in\RR$ for which both sides are defined.
\item A \emph{vector field} on $S$ is a derivation $Y$ of $\CIN(S)$ such that the map $(t,x)\mapsto\exp(tY)(x)$, sending $(t,x)$ to the maximal integral curve of $Y$ through $x$ evaluated at $t$, is a local flow.  Denote the set of all vector fields on $S$ by $\Vect(S)$.
\item Let $S$ be a subcartesian space, and let $\mathcal{M}$ be a smooth stratification of it.  Then the pair $(S,\mathcal{M})$ is said to \emph{admit local extensions of vector fields} if for any stratum $M\in\mathcal{M}$, any vector field $X_M$ on $M$, and any $x\in M$, there exist an open neighbourhood $U$ of $x$ and a vector field $X\in\Vect(S)$ such that $X_M|_{U\cap M}=X|_{U\cap M}.$
\item Let $S$ be a subcartesian space.  The \emph{orbit} of $\Vect(S)$ through a point $x$, denoted $O^S_x$, is the set of all points $y\in S$ such that there exist vector fields $Y_1,...,Y_k$ and real numbers $t_1,...,t_k\in\RR$ satisfying $$y=\exp(t_kY_k)\circ...\circ\exp(t_1Y_1)(x).$$
Denote by $\mathcal{O}_S$ the set of all orbits $\{O^S_x\mid x\in S\}$.

\end{enumerate}
\end{definition}

\begin{remark}\labell{r:vector fields}
Let $S$ be a subcartesian space.
\begin{enumerate}
\item \labell{i:vect invariant} Let $R$ be another subcartesian space, and let $F:R\to S$ be a diffeomorphism.  Then $F$ induces a bijection between $\Vect(R)$ and $\Vect(S)$.  Indeed, $F$ induces an isomorphism between the derivations of $\CIN(R)$ and those of $\CIN(S)$.  If $Z\in\Vect(R)$, then $F_*Z$ is a vector field on $S$: $$\frac{d}{dt}\Big|_{t=t_0}F\circ\exp(tZ)(x)=F_*Z|_{F(x)}.$$  The reverse direction also holds, and so the result follows.
\item \labell{i:vect module} $\Vect(S)$ is a $\CIN(S)$-module; that is, for any $f\in\CIN(S)$ and any vector field $Y\in\Vect(S)$, the derivation $fY$ is a vector field (see \cite[Corollary 4.71]{watts}).
\item \labell{i:loc ext vector fields} Let $\mathcal{M}$ be a smoothly locally trivial smooth stratification of $S$.  Then $(S,\mathcal{M})$ admits local extensions of vector fields (see \cite[Theorem 4.5]{LS} or \cite[Proposition 4.1.5]{sniatycki}).
\item \labell{i:LS1} Let $\mathcal{M}$ be a smooth stratification of $S$.  If $(S,\mathcal{M})$ admits local extensions of vector fields, then the set of orbits $\mathcal{O}_S$ forms a stratification of $S$, of which $\mathcal{M}$ is a refinement.  In particular, if $\mathcal{M}$ is minimal, then $\mathcal{M}=\mathcal{O}_S$ (see \cite[Theorem 4.6]{LS} or \cite[Theorem 4.1.6]{sniatycki}).
\item \labell{i:LS2} Let $\mathcal{O}_S$ be the set of orbits induced by $\Vect(S)$.  Then $\mathcal{O}_S$ is a stratification of $S$ if and only if it is locally finite and each $O\in\mathcal{O}_S$ is locally closed in $S$ (see  \cite[Theorem 4.3]{LS} or \cite[Corollary 4.1.3]{sniatycki}).
\end{enumerate}
\end{remark}

\begin{theorem}[The Orbifold Stratification is Induced by Vector Fields]\labell{t:orbits}
Let $X$ be an orbifold.  Then the orbifold stratification is given by the set of orbits $\mathcal{O}_X$ induced by $\Vect(X)$.
\end{theorem}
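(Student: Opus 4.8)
The plan is to prove that the orbifold stratification $\mathcal{M}$ of $X$ is \emph{minimal} among smooth stratifications of $X$, and then to read off the theorem from the theory already recalled. For the reduction: by \hypref{l:triviality}{Lemma} the stratification $\mathcal{M}$ is smoothly locally trivial, so $(X,\mathcal{M})$ admits local extensions of vector fields by \hypref{i:loc ext vector fields}{Item} of \hypref{r:vector fields}{Remark}. Hence \hypref{i:LS1}{Item} of \hypref{r:vector fields}{Remark} applies: the set of orbits $\mathcal{O}_X$ of $\Vect(X)$ is a smooth stratification of $X$, it is refined by $\mathcal{M}$, and if $\mathcal{M}$ is minimal then $\mathcal{M}=\mathcal{O}_X$. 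So it suffices to prove that $\mathcal{M}$ is minimal.

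Minimality is first established locally. Fix $x\in X$; by \hypref{d:isotropy group}{Definition} and \hypref{r:isotropy group}{Remark} there is a chart $(\RR^n,\Gamma_x,\phi)$ about $x$, and on the open set $V:=\phi(\RR^n)$, which is functionally diffeomorphic to $\RR^n/\Gamma_x$, the orbifold stratification restricts to the orbit-type stratification (\hypref{d:strat2}{Definition}). If $\Gamma_x$ is trivial then $V$ is a manifold and $\mathcal{M}|_V$ is its decomposition into connected components, which is minimal. Otherwise the finite group $\Gamma_x\subseteq\GL(\RR^n)$ acts properly and effectively on $\RR^n$, so by \hypref{x:ots2}{Example} (a theorem of Bierstone) the orbit-type stratification of $\RR^n/\Gamma_x$ is minimal. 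In either case $\mathcal{M}|_V$ is a minimal smooth stratification of $V$.

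It remains to pass from local to global minimality. Suppose $\mathcal{M}$ refines a smooth stratification $\mathcal{M}'$ of $X$; I claim $\mathcal{M}=\mathcal{M}'$. Let $N\in\mathcal{M}$ and let $M'$ be the unique stratum of $\mathcal{M}'$ with $N\subseteq M'$. Given $x\in N$, restrict $\mathcal{M}'$ to a chart neighbourhood $V$ of $x$ as above (passing to connected components of the traces of the strata); this is again a smooth stratification of $V$, refined by $\mathcal{M}|_V$, so minimality of $\mathcal{M}|_V$ forces the two to coincide. Therefore $N\cap V$ is a union of connected components of $M'\cap V$, hence open in $M'$; letting $x$ range over $N$ shows that $N$ is open in $M'$. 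The same applies to every stratum of $\mathcal{M}$ contained in $M'$, so $M'\smallsetminus N$ is open in $M'$, whence $N$ is closed in $M'$; since $M'$ is connected and $N$ nonempty, $N=M'$. Thus $\mathcal{M}=\mathcal{M}'$, so $\mathcal{M}$ is minimal, and $\mathcal{M}=\mathcal{O}_X$ by the reduction above.

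The main obstacle I anticipate is not conceptual but the bookkeeping behind the assertion that the restriction of a smooth stratification to an open subset, after taking connected components of the traces of its strata, is again a smooth stratification: one must check that local finiteness and the frontier condition survive, which is where paracompactness and second countability of $X$, together with the finiteness of orbit types in each chart, are used. (An alternative that sidesteps this is to argue directly that every global vector field restricts on each chart to a vector field whose flow preserves the orbit-type strata there; a connectedness argument along integral curves then forces $O^X_x$ into the stratum of $\mathcal{M}$ through $x$, which combined with the refinement above gives equality.)
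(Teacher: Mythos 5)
Your argument is correct in substance but globalizes differently from the paper, and the comparison is instructive. Both proofs share the same opening moves: smooth local triviality (\hypref{l:triviality}{Lemma}) gives local extensions of vector fields, \hypref{i:LS1}{Item} of \hypref{r:vector fields}{Remark} then makes the orbit partition a stratification refined by the orbifold stratification, and Bierstone's theorem (\hypref{x:ots2}{Example}) supplies minimality on each chart. You then apply \hypref{i:LS1}{Item} once to $(X,\mathcal{M})$ and reduce everything to \emph{global} minimality of $\mathcal{M}$, obtained by your clopen argument inside each stratum of a competitor $\mathcal{M}'$. The paper never proves global minimality: it runs the machinery chart by chart to get $\mathcal{O}_{\phi(U)}=\mathcal{M}|_{\phi(U)}$, proves $O^{\phi(U)}_x\subseteq O^X_x$ by multiplying vector fields on $\phi(U)$ by bump functions (using normality of $X$ and the $\CIN(X)$-module structure of $\Vect(X)$), and then shows the partition of $\phi(U)$ by components of $\mathcal{O}_X$-traces is a stratification refined by $\mathcal{M}|_{\phi(U)}$, so local minimality finishes. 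Your route buys a statement of independent interest (global minimality of the orbifold stratification) and dispenses with the bump-function construction; the paper's route avoids ever restricting an \emph{arbitrary} competitor $\mathcal{M}'$ to a chart. That restriction is the one place to be careful: local finiteness of $\mathcal{M}'|_V$ is actually automatic (since $\mathcal{M}|_V$ is finite and refines $\mathcal{M}'|_V$, the latter is finite too), but the frontier condition for components of the traces $M''\cap V$ can genuinely fail --- such a component may meet the closure of one component of $M'\cap V$ without being contained in it --- and \hypref{d:refinement}{Definition} only quantifies minimality over honest smooth stratifications. So you should either verify the frontier condition in this situation or note that your clopen argument only uses that both restricted families are finite partitions into locally closed submanifolds with one refining the other, and invoke Bierstone's minimality at that level of generality. (The paper's own proof carries an analogous unverified frontier condition for its partition $P$, so this is a shared debt rather than a defect peculiar to your approach.) Your parenthetical alternative --- extending vector fields by bump functions and tracking integral curves --- is essentially the mechanism the paper actually uses.
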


\begin{proof}
Let $(U,\Gamma,\phi)$ be a chart of $X$.  By \hypref{l:triviality}{Lemma}, the orbifold stratification on $\phi(U)$ is smoothly locally trivial.  Hence, it admits local extensions of vector fields by \hypref{i:loc ext vector fields}{Item} of \hypref{r:vector fields}{Remark}.  Thus, the orbits $\mathcal{O}_{\phi(U)}$ of $\Vect(\phi(U))$ form a stratification of $\phi(U)$ which is refined by the orbifold stratification on $\phi(U)$ by \hypref{i:LS1}{Item} of \hypref{r:vector fields}{Remark}.  However, since the orbifold stratification on $\phi(U)$ is minimal (see \hypref{x:ots2}{Example}), we conclude that the stratification by orbits $\mathcal{O}_{\phi(U)}$ is equal to the orbifold stratification.

By \hypref{l:strat}{Lemma}, we already know that the orbifold stratification is independent of chart.  Thus, it remains to show that for any $x\in\phi(U)$, we have that $O^{\phi(U)}_x$ is a connected component of $O^{X}_x\cap\phi(U)$.

We begin with the inclusion $O^{\phi(U)}_x\subseteq O^{X}_x\cap\phi(U)$.  Let $y\in O^{\phi(U)}_x$.  Then there exist vector fields $Y_1,...,Y_k\in\Vect(\phi(U))$ and $t_1,...,t_k\in\RR$ such that
\begin{equation}\labell{e:orbits}
y=\exp(t_kY_k)\circ...\circ\exp(t_1Y_1)(x).
\end{equation}
Fix $i\in\{1,...,k\}$.  The path $c:[0,t_i]\to\phi(U)$ defined by $$c:s\mapsto\exp(sY_i)\circ\exp(t_{i-1}Y_{i-1})\circ...\circ\exp(t_1Y_1)(x)$$ has a compact image in $\phi(U)$.  Now, $\phi(U)$ is open in $X$, and $X$ is normal (see \hypref{i:locally compact}{Item} of \hypref{r:orbifold}{Remark}).  So we can find an open neighbourhood $V$ of $c([0,t_i])$ and an open neighbourhood $W$ of the complement of $\phi(U)$ in $X$ that are disjoint.  Let $b_i:X\to\RR$ be a smooth bump function that is equal to $1$ on $V$ and has support in the complement of $W$ (it follows from \hypref{x:orbitspace}{Example} that $\phi(U)\subset\RR^N$ for some $N$, and so such a $b_i$ can be easily constructed).  Then by \hypref{i:vect module}{Item} of \hypref{r:vector fields}{Remark}, $b_iY_i\in\Vect(X)$.  Replacing each vector field $Y_i$ with $b_iY_i$ in \hypref{e:orbits}{Equation}, we obtain that $y\in O_x^{X}$.

Now consider the partition $P$ of $\phi(U)$ by connected components of $O\cap\phi(U)$ for each $O\in\mathcal{O}_X$.  Each element $Q$ of $P$ is an immersed submanifold of $\phi(U)$.  Moreover, each element $Q$ of $P$ is a finite union of strata of $\phi(U)$, and since each of these strata is locally closed, we have that $Q$ is locally closed.  Since for each $x\in\phi(U)$ we have $O^{\phi(U)}_x\subseteq O^X_x\cap\phi(U)$, we conclude that $P$ is locally finite.  It follows that $\mathcal{O}_X$ is locally finite and its elements are locally closed.  By \hypref{i:LS2}{Item} of \hypref{r:vector fields}{Remark}, $\mathcal{O}_X$ is a smooth stratification of $X$.  Moreover, $P$ is a smooth stratification of $\phi(U)$.  Since this stratification is refined by the orbifold stratification of $\phi(U)$, which is minimal, we conclude that $\mathcal{O}_{\phi(U)}=P$.
\end{proof}

\begin{corollary}[Invariance of Stratification]\labell{c:strat}
The orbifold stratification is an invariant of the orbifold differential structure.
\end{corollary}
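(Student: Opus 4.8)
The plan is to read off the corollary from \hypref{t:orbits}{Theorem} together with the fact that the family of vector fields of a subcartesian space is built entirely from its ring of smooth functions.  Spelled out, the claim to prove is that any functional diffeomorphism $F\colon X\to Y$ between orbifolds, regarded as differential spaces via \hypref{d:diff orbifold}{Definition}, carries the orbifold stratification of $X$ onto that of $Y$ stratum by stratum; this is what it means for the orbifold stratification to be an invariant of the orbifold differential structure.

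First I would record the ingredients.  By \hypref{i:orb is subcart}{Item} of \hypref{p:diff orb}{Proposition} and \hypref{i:locally compact}{Item} of \hypref{r:orbifold}{Remark}, each of $X$ and $Y$ is a locally compact subcartesian space, so the notions of derivation, integral curve, vector field, and orbit from \hypref{d:tangent}{Definition} and \hypref{d:vector fields}{Definition} all apply, and maximal integral curves exist and are unique.  Since these notions depend only on the rings $\CIN(X)$ and $\CIN(Y)$, the diffeomorphism $F$ induces an isomorphism $\Der\CIN(X)\cong\Der\CIN(Y)$ intertwining flows, namely $F\circ\exp(tZ)(x)=\exp(tF_*Z)(F(x))$, and in particular a bijection $F_*\colon\Vect(X)\to\Vect(Y)$; this is exactly \hypref{i:vect invariant}{Item} of \hypref{r:vector fields}{Remark}.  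Composing such flows, $F$ sends the orbit $O^X_x$ onto $O^Y_{F(x)}$ for every $x\in X$, so $F$ maps the partition $\mathcal{O}_X$ bijectively onto $\mathcal{O}_Y$; and because each orbit is a stratum, hence a submanifold whose induced differential subspace structure is its manifold structure, the restriction of $F$ to each orbit is a diffeomorphism of manifolds.

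Finally, apply \hypref{t:orbits}{Theorem} at both ends: $\mathcal{O}_X$ is precisely the orbifold stratification of $X$, and $\mathcal{O}_Y$ that of $Y$.  Hence $F$ carries the orbifold stratification of $X$ onto the orbifold stratification of $Y$, as desired.  There is no serious obstacle here beyond assembling the previously established results; the one point that genuinely needs the earlier machinery is that the \emph{defining} property of a vector field in \hypref{d:vector fields}{Definition} --- that its maximal integral curves glue into a local flow --- is itself diffeomorphism-invariant, which is the content of \hypref{i:vect invariant}{Item} of \hypref{r:vector fields}{Remark} and rests on local compactness.
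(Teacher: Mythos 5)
Your proposal is correct and follows exactly the paper's route: the paper's own proof is the one-line observation that the corollary follows from \hypref{i:vect invariant}{Item} of \hypref{r:vector fields}{Remark} together with \hypref{t:orbits}{Theorem}, and your argument is simply a careful unpacking of those same two ingredients. No gaps; you have just written out the details the paper leaves implicit.
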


\begin{proof}
This follows from \hypref{i:vect invariant}{Item} of \hypref{r:vector fields}{Remark} and \hypref{t:orbits}{Theorem}.
\end{proof}

\begin{example}[Reflections and Rotations in the Plane - Part III]\labell{x:varieties4}
Continuing \hypref{x:varieties3}{Example}, the strata of $\RR^2/D_k$ are given by the origin $\{(0,0)\}$, the two connected components of $\{(s,t)\mid t^2=s^{k},~s>0\}$, and the open dense stratum given by $\{(s,t)\mid t^2>s^{k},~s>0\}$.  Note that the codimension-1 and codimension-2 strata (called the \emph{singular strata}) together form the set $S_k$ of \hypref{x:varieties1}{Example}.

Similarly, the strata of $\RR^2/\ZZ_k$ are given by the origin $\{(0,0,0)\}$, and the open dense stratum $\{(s,t,u)\mid s^2+t^2=u^k,~u>0\}$.
\eoe
\end{example}

\section{Recovering the Charts}\labell{s:recover charts}

We begin with a discussion of orbifold covering spaces, based on \cite[Chapter 13]{thurston}; in particular, we need universal orbifold covering spaces for the proof of the \hyperref[t:main]{Main Theorem}.  Moreover, these motivate orbifold fundamental groups.  In \cite{HND} Haefliger and Ngoc Du show that the orbifold fundamental group can be obtained using the topology, stratification, and orders of points in codimension-2 strata (see \hypref{t:HND}{Theorem}).  In the previous sections we showed that the topology and stratifications are invariants of the orbifold differential structure, and in \hypref{p:milnor}{Proposition}, we show that the order of a point is also such an invariant.  This is important: while the order of a point in an orbifold may show up as the degree of an associated defining-polynomial (see \hypref{x:varieties5}{Example}), composition with a diffeomorphism may not yield a polynomial, and so ``degree'' does not make sense.  We then prove the \hyperref[t:main]{Main Theorem} at the end of the section.

\begin{definition}[Orbifold Covering Space]\labell{d:covering space}
Let $X$ be an orbifold, and fix a base point $x_0$ in the codimension-0 stratum of $X$.
\begin{enumerate}
\item An \emph{orbifold covering space} of $X$ is an orbifold $\tilde{X}$ with a base point $\tilde{x}_0$ in its codimension-0 stratum, and an orbifold smooth ``projection'' map $p:\tilde{X}\to X$ which sends $\tilde{x}_0$ to $x_0$.  For any $x\in X$ we require that there is a chart $(U,\Gamma,\phi)$ of $X$ with $x\in\phi(U)$ and for each connected component $C_i$ of $p^{-1}(\phi(U))$ there is a $\Gamma$-equivariant diffeomorphism $\Psi_i:C_i\to U/\Gamma_i$ where $\Gamma_i\subseteq\Gamma$ is a subgroup.
\item $X$ is called a \emph{good orbifold} if there exists an orbifold covering space that is a smooth manifold; otherwise, it is called a \emph{bad orbifold}.
\item A \emph{universal orbifold covering space} of $X$ is a connected orbifold covering space $p:\tilde{X}\to X$ such that
if $\tilde{X}'$ is any other orbifold covering space of $X$ with projection $p':\tilde{X}'\to X$, then there is a lifting of $p$ via $p'$ to a map $q:\tilde{X}\to\tilde{X}'$ by which $\tilde{X}$ is an orbifold covering space of $\tilde{X}'$.
\item If $p:\tilde{X}\to X$ is a universal orbifold covering space of $X$ with base point $\tilde{x}_0\in p^{-1}(x_0)$, then for any other $y\in p^{-1}(x_0)$, there is a deck transformation taking $\tilde{x}_0$ to $y$; that is, an orbifold diffeomorphism $f:\tilde{X}\to\tilde{X}$ such that $p\circ f=p$ and $f(\tilde{x}_0)=y$.  The group of deck transformations of $\tilde{X}$ is called the \emph{orbifold fundamental group} of $X$.  (See \cite[Definition 13.2.5]{thurston}.)
\end{enumerate}
\end{definition}

\begin{remark}\labell{r:covering space}
\noindent
\begin{enumerate}
\item \labell{i:not top} Note that an orbifold covering space of an orbifold $X$ in general is not a covering space in the topological sense.
\item \labell{i:simply connected} If $X=M/\Gamma$ where $M$ is a simply connected manifold on which a finite group $\Gamma$ acts, then $M$ is the universal orbifold covering space of $X$.  If $M$ is not simply connected, then we can take its (topological) universal covering space as the universal orbifold covering space of $X$.
\item \labell{i:univ cover exists} Let $X$ be an orbifold.  Then $X$ has a universal orbifold covering space $\tilde{X}$, which is unique up to orbifold diffeomorphism.  Moreover, if $X$ is a good orbifold, then $\tilde{X}$ is a simply connected manifold.  (see \cite[Proposition 13.2.4]{thurston}.)
\end{enumerate}
\end{remark}

\begin{definition}[Order of a Point]\labell{d:point order}
Let $X$ be an orbifold and let $x\in X$ with isotropy group $\Gamma_x$.  Then, the \emph{order} of $x$ is equal to the order of the group $\Gamma_x$.
\end{definition}

\begin{remark}\labell{r:point order}
It follows from \hypref{r:isotropy group}{Remark} that the order of a point is well-defined.
\end{remark}

\begin{theorem}[Recovering the Groups]\labell{t:HND}
Let $X$ be a connected orbifold.  Then a presentation for the orbifold fundamental group can be constructed using the topology, stratification, and the orders of points in codimension-2 strata.
\end{theorem}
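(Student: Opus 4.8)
The plan is to assemble, in the present setting, the presentation of the orbifold fundamental group due to Haefliger and Ngoc Du \cite{HND}, after verifying that every combinatorial ingredient of that presentation is determined by the topology, the orbifold stratification, and the orders of the codimension-$2$ strata. By \hypref{i:univ cover exists}{Item} of \hypref{r:covering space}{Remark}, $X$ has a universal orbifold covering space $p\colon\tilde X\to X$, unique up to orbifold diffeomorphism, and by \hypref{d:covering space}{Definition} $\pi_1^{\mathrm{orb}}(X)$ is its group of deck transformations; so it is enough to write down a presentation of this group from the stated data.

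First I would record the local structure. Write $X^*$ for the underlying topological space, $X_0$ for the open dense codimension-$0$ stratum, $\Sigma_1$ for the union of the codimension-$1$ strata, and $\Sigma_{\geq k}$ for the union of all strata of codimension $\geq k$. Near a point of $\Sigma_1$ the orbifold is modelled on a half-space $\RR^n/\ZZ_2$ (the isotropy being a single reflection), so $X^*\setminus\Sigma_{\geq 2}=X_0\cup\Sigma_1$ is a smooth manifold with boundary $\Sigma_1$. Near a codimension-$2$ stratum $N$, by \hypref{r:LDV}{Remark} the orbifold is modelled on $\RR^{n-2}\times(\RR^2/G)$ with $G$ cyclic or dihedral; the order of each point of $N$ in the sense of \hypref{d:point order}{Definition} is $|G|$, and $G$ is dihedral exactly when $\overline N$ meets $\Sigma_1$, in which case $\overline N$ lies in the closures of exactly two local sheets of $\Sigma_1$. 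Consequently the topology, the stratification and the order function on the codimension-$2$ strata together determine: the manifold-with-boundary $X^*\setminus\Sigma_{\geq 2}$ with its partition of the boundary into mirror components; the codimension-$2$ strata of cyclic (cone-point) type with their orders; and those of dihedral (corner) type with their orders and their two adjacent mirrors.

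The presentation is then the Haefliger--Ngoc Du / Thurston one (\cite{HND}, \cite[Chapter 13]{thurston}): start from a finite generating set of $\pi_1$ of the complement of the full singular locus based at $x_0$; adjoin a reflection generator $r_i$ for each mirror component, with $r_i^2=1$ together with the conjugation relations coming from loops crossing that mirror; for each cone-point stratum $N$ of order $m$ adjoin $\gamma_N^{\,m}=1$, where $\gamma_N$ is the class of a small meridian circle of $N$; and for each corner stratum of order $2\ell$ with adjacent mirrors $F_i,F_j$ adjoin $(r_ir_j)^{\ell}=1$. Two points then have to be established, both already in \cite{HND}: that deleting all strata of codimension $\geq 3$ does not change $\pi_1^{\mathrm{orb}}$ --- a general-position argument, since loops and homotopies can be made transverse to the stratification and hence miss the codimension-$\geq 3$ locus, so the inclusion $X^*\setminus\Sigma_{\geq 3}\hookrightarrow X$ induces an isomorphism of orbifold fundamental groups --- and that, for the remaining codimension-$\leq 2$ orbifold, a van Kampen argument over a cover by regular neighbourhoods of the strata produces exactly the generators and relations above and identifies the resulting group with the deck group of $\tilde X$.

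The only thing genuinely contributed here beyond \cite{HND} is the observation of the second paragraph, namely that the cyclic/dihedral dichotomy and all the relevant orders are readable off the topology, the stratification and the order function; this part is elementary given \hypref{r:LDV}{Remark}. The main obstacle is the transversality and van Kampen bookkeeping --- making precise that $\pi_1^{\mathrm{orb}}$ ``sees only codimension $\leq 2$'' and that the reflection and dihedral relations are exactly the ones listed --- which is the technical core of \cite{HND} and which I would cite rather than reprove.
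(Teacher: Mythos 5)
Your proposal is correct and follows essentially the same route as the paper: both defer the technical core (the transversality/van Kampen argument) to Haefliger--Ngoc Du and simply record that every ingredient of their presentation --- the fundamental group of the codimension-$\leq 1$ part, the reflection generators, the cone-point relations, and the corner relations distinguished by whether a codimension-$2$ stratum lies in the closure of a codimension-$1$ stratum --- is determined by the topology, the stratification, and the order function. Your extra paragraph spelling out the cyclic/dihedral dichotomy via \hypref{r:LDV}{Remark} is a harmless elaboration of what the paper's algorithm encodes implicitly.
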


\begin{proof}
This is proved by Haefliger and Ngoc Du in \cite{HND}.  See also Section 1.3 of \cite{davis}.  We briefly explain the algorithm here.  Let $X_{reg}$ be the differential subspace of $X$ consisting of codimension-0 and codimension-1 strata.  Fix a base point $x$ in the codimension-0 stratum. Let $G$ be the (topological) fundamental group of $X_{reg}$ with respect to $x$.
\begin{enumerate}
\item For each codimension-1 stratum $S_i$, and for each homotopy class $\mu$ of paths starting at $x$ and ending on $S_i$ attach a generator $\beta_{i,\mu}$ to $G$ with relation $\beta_{i,\mu}^2=1$.
\item For each codimension-2 stratum $R$ in the closure of a codimension-1 stratum, for each pair of codimension-1 strata $S_i$, $S_{i'}$ with $R$ in their closures, and for each pair $\beta_{i,\mu}$, $\beta_{i',\mu'}$ (where $\mu\neq\mu'$) as constructed in Item 1 above, add the relation $(\beta_{i,\mu}\beta_{i',\mu'})^k=1$ where $2k$ is the order of any point in $R$.
\item For each codimension-2 stratum $T_j$ not in the closure of a codimension-1 stratum, let $\alpha_j$ be an element of $G$ represented by a loop starting at $x$ and going around $T_j$.  Then add the relation $\alpha_j^k=1$ to $G$ where $k$ is the order of any point in $T_j$.
\end{enumerate}
The resulting group is the orbifold fundamental group of $X$.
\end{proof}

\begin{example}[Reflections and Rotations in the Plane - Part IV]\labell{x:varieties5}
Consider the orbifold $\RR^2/D_k$.  In \hypref{x:varieties3}{Example} we saw that $\RR^2/D_k$ embedded into $\RR^2$ as the semi-algebraic variety $$R_k:=\{(s,t)\mid t^2\leq s^{k},~s\geq0\},$$ with its strata listed in \hypref{x:varieties4}{Example}.  Applying the algorithm in the proof of \hypref{t:HND}{Theorem}, we have that the orbifold fundamental group is $$\langle\beta_1,\beta_2\mid \beta_1^2=\beta_2^2=(\beta_1\beta_2)^k=1\rangle.$$  But this is exactly $D_k$.

Similarly, consider the orbifold $\RR^2/\ZZ_k$.  In \hypref{x:varieties3}{Example} we saw that $\RR^2/\ZZ_k$ embedded into $\RR^3$ as the semi-algebraic variety $$C_k:=\{(s,t,u)\mid s^2+t^2=u^k,~u\geq 0\},$$ with its strata listed in \hypref{x:varieties4}{Example}.  Applying the algorithm in the proof of \hypref{t:HND}{Theorem}, we have that the orbifold fundamental group is $$\langle\alpha\mid \alpha^k=1\rangle.$$  This is exactly $\ZZ_k$.
\eoe
\end{example}

\begin{definition}[Codimension of a Germ]\labell{d:milnor}
Let $\mathcal{E}_n$ be the $\RR$-algebra of germs of smooth real-valued functions at $0\in\RR^n$: $$\mathcal{E}_n=\CIN(\RR^n)/\!\sim$$ where $f\sim g$ if there exists an open neighbourhood of $0$ on which $f=g$.  (In practice, where it doesn't cause confusion, we will often identify an element of $\mathcal{E}_n$ with one of its representatives.)  Let $f\in\mathcal{E}_n$, and define $J_f$ to be the \emph{Jacobian ideal} of $f$, which is the ideal of $\mathcal{E}_n$ generated by the germs of partial derivatives of $f$ at $0$: $$J_f=\left\langle\frac{\partial f}{\partial x_1},...,\frac{\partial f}{\partial x_n}\right\rangle.$$  The \emph{codimension} of (the germ of) $f$ at $0$, denoted $\cod(f)$, is defined to be the dimension of the quotient algebra $\mathcal{E}_n/J_f$ as a vector space.
\end{definition}

\begin{proposition}[Codimension of a Germ is an Invariant]\labell{p:milnor}
Let $f\in\CIN(\RR^n)$, with $f(0)=a$.  Assume that $0$ is a critical point of $f$.  Then the codimension of (the germ of) $f$ at $0$ is invariant under diffeomorphism.  In particular, $\cod(f)$ is an invariant of the differential structure on the differential subspace $f^{-1}(a)\subseteq\RR^n$.
\end{proposition}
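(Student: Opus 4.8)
The plan is to reduce everything to one purely algebraic fact about germs: if $\psi$ is a germ at $0$ of a diffeomorphism of $\RR^n$ fixing $0$, then $\cod(f)=\cod(f\circ\psi)$. This reduction is clean because a functional diffeomorphism between open neighbourhoods of $0$ in $\RR^n$ is an honest diffeomorphism --- on an open subset of $\RR^n$ the Sikorski structure is the usual $\CIN$ --- so any change of ambient coordinates fixing $0$ produces such a germ $\psi$, and $\cod(f)$ only sees the germ of $f$ at $0$. (The critical-point hypothesis is not needed for this reduction: at a regular point some $\partial f/\partial x_i$ is a unit, so $J_f=\mathcal{E}_n$ and $\cod(f)=0$, which is trivially invariant; the hypothesis is kept because the singular case is the one used later.)

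For the germ-level statement, note that $\psi$ induces an $\RR$-algebra automorphism $\psi^*\colon\mathcal{E}_n\to\mathcal{E}_n$, $g\mapsto g\circ\psi$. By the chain rule,
$$\frac{\partial(f\circ\psi)}{\partial x_i}=\sum_{j=1}^{n}\Bigl(\tfrac{\partial f}{\partial x_j}\circ\psi\Bigr)\,\tfrac{\partial\psi_j}{\partial x_i}=\sum_{j=1}^{n}\psi^*\!\Bigl(\tfrac{\partial f}{\partial x_j}\Bigr)\,\tfrac{\partial\psi_j}{\partial x_i},$$
and the matrix $\bigl(\partial\psi_j/\partial x_i\bigr)$ lies in $\GL_n(\mathcal{E}_n)$, since its determinant is a unit (it is nonzero at $0$). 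Hence the ideal $J_{f\circ\psi}$ generated by the left-hand sides coincides with the ideal generated by the germs $\psi^*(\partial f/\partial x_j)$, that is, with $\psi^*(J_f)$. Thus $\psi^*$ carries $J_f$ onto $J_{f\circ\psi}$ and descends to an $\RR$-algebra isomorphism $\mathcal{E}_n/J_f\xrightarrow{\ \sim\ }\mathcal{E}_n/J_{f\circ\psi}$; comparing $\RR$-dimensions (finite or not) gives $\cod(f)=\cod(f\circ\psi)$. Geometrically this says that the Jacobian ideal transforms equivariantly under coordinate changes, which is precisely why $\cod$ of a germ behaves like a Milnor number.

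The final ``in particular'' then follows: being untouched by ambient diffeomorphisms, $\cod(f)$ depends only on the germ of the differential subspace $f^{-1}(a)$ at $0$ (after a harmless translation of values we may take $a=0$). Concretely, if an ambient diffeomorphism $\varphi$ carries $(f^{-1}(0),0)$ onto $(g^{-1}(0),0)$, then $g\circ\varphi$ and $f$ cut out the same germ of hypersurface, and one concludes by combining the invariance just proved with the facts that a reduced defining function is unique up to a unit and that $\cod$ is insensitive to units --- the latter being transparent in the cases needed downstream, where the singularities are isolated (equivalently, the defining functions are quasi-homogeneous, so that $\cod(f)=\dim_\RR\mathcal{E}_n/(f,J_f)$, manifestly an invariant of the ring $\mathcal{E}_n/(f)$ alone). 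I expect this last point --- passing from invariance under ambient diffeomorphisms to invariance of the abstract differential structure of $f^{-1}(a)$ --- rather than the chain-rule computation, to be the only place requiring real care.
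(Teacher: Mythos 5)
Your first half is exactly the paper's route: the paper cites \cite[Theorem~2.12]{gibson} for precisely the chain-rule computation you write out (the Jacobian matrix of $\psi$ is a unit in $\GL_n(\mathcal{E}_n)$, so $J_{f\circ\psi}=\psi^*J_f$ and the quotient algebras are isomorphic), and your remark that the critical-point hypothesis only rules out the trivial case is harmless.

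The gap is in the second half, and it sits exactly where you predicted ``the only place requiring real care.'' The hypothesis of the ``in particular'' clause is an abstract functional diffeomorphism from $(f^{-1}(a),\CIN(f^{-1}(a)))$ to some other differential space --- not an ambient diffeomorphism of $\RR^n$ --- and promoting the former to the latter is the actual content of the clause. The paper does this in two steps: it first shows the target is subcartesian and identifies its minimal local embedding dimension with $\dim T_xS$, a diffeomorphism invariant by \cite[Lemma~3.4]{LSW}, so that after padding with zeros both spaces sit as differential subspaces of the \emph{same} $\RR^n$; it then invokes \cite[Theorem~6.3]{watts}, which extends a functional diffeomorphism between differential subspaces of $\RR^n$ fixing $0$ to a diffeomorphism of open neighbourhoods of $0$ in $\RR^n$, and only then applies the chain-rule invariance. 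Your proposal simply asserts the output of these steps (``being untouched by ambient diffeomorphisms, $\cod(f)$ depends only on the germ of the differential subspace''). Worse, the substitute you sketch imports complex-analytic facts that fail in the real $\CIN$ category: a smooth defining function is not determined by its zero set up to a unit ($x^2+y^2$ and $x^2+y^4$ cut out the same germ $\{0\}\subseteq\RR^2$ but have codimensions $1$ and $3$), and $\mathcal{E}_n/(f)$ is not the ring of germs of smooth functions on $f^{-1}(0)$ --- the ideal of functions vanishing on the zero set is in general strictly larger than $(f)$ --- so $\dim_\RR\mathcal{E}_n/(f,J_f)$ is not ``manifestly an invariant'' of the differential subspace. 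In the paper this ambiguity never has to be resolved, because the comparison is always against model germs whose defining functions are fixed in advance; your argument as written has to resolve it and cannot.
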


\begin{proof}
The proof that the $\cod(f)$ is invariant under diffeomorphism is an immediate consequence of the chain rule.  See \cite[Theorem 2.12]{gibson} for more details.

Next, let $\varphi$ be a diffeomorphism between $f^{-1}(a)$ and a differential space $(S,\CIN(S))$.  Then $(S,\CIN(S))$ is subcartesian.  Let $x=\varphi(0)\in S$.  Then there is an open neighbourhood $U$ of $x$ in $S$ and a diffeomorphism $\psi:U\to\tilde{U}$ where $\tilde{U}$ is a differential subspace of $\RR^m$.  Without loss of generality, we may choose $m$ to be the minimal such integer for which the diffeomorphism $\psi$ exists.  By \cite[Lemma 3.4]{LSW} this is equal to the dimension of $T_xS$, which is invariant under diffeomorphism by \hypref{i:str dim invt}{Item} of \hypref{r:zariski}{Remark}.  Thus $n\geq m$.  If $n>m$, then embed $\tilde{U}\subseteq\RR^m$ into $\RR^n$ by $$(x_1,...,x_m)\mapsto(x_1,...,x_m,0,...,0).$$  In either case we now have a diffeomorphism $\tilde{\varphi}$ from $f^{-1}(a)$ to $\tilde{U}$ which are both differential subspaces of $\RR^n$.  Without loss of generality, assume that $\tilde{\varphi}(0)=0$.  By \cite[Theorem 6.3]{watts}, $\tilde{\varphi}$ extends to a diffeomorphism from an open neighbourhood of $0\in\RR^n$ to itself.  The result now follows.
\end{proof}

\begin{example}[Reflections and Rotations in the Plane - Part V]\labell{x:varieties6}
Continuing \hypref{x:varieties5}{Example}, recall that the singular strata of the orbit space $\RR^2/D_k$ are given by the relations
\begin{align*}
t^2-s^{k}=&~0,\\
s\geq&~0.
\end{align*}
The codimension of $f(s,t)=t^2-s^{k}$ is computed as follows.  The partial derivatives are $$\frac{\partial f}{\partial s}(s,t)=-ks^{k-1} \text{ and } \frac{\partial f}{\partial t}(s,t)=2t.$$  It follows that $\mathcal{E}_2/J_f$ is generated by $$s,s^2,...,s^{k-2}$$ and so $\cod(f)=k-2+1=k-1$ (where we add one to account for the constant functions). Note that $|D_k|=2(\cod(f)+1)$.

Similarly, recall that $\RR^2/\ZZ_k$ is given by the relations
\begin{align*}
s^2+t^2-u^k=&~0\\
u\geq&~0
\end{align*}
The codimension of $f(s,t,u)=s^2+t^2-u^k$ is computed as follows.  The partial derivatives are $$\frac{\partial f}{\partial s}(s,t,u)=2s,~\frac{\partial f}{\partial t}(s,t,u)=2t, \text{ and } \frac{\partial f}{\partial u}(s,t,u)=-ku^{k-1}.$$  It follows that $\mathcal{E}_3/J_f$ is generated by $$u,u^2,...,u^{k-2}$$ and so $\cod(f)=k-2+1=k-1$.  Note that $|\ZZ_k|=\cod(f)+1$.
\eoe
\end{example}

\begin{theorem}[Order of a Point is an Invariant]\labell{t:point order}
Let $X$ be an orbifold, and let $x\in X$.  If $x$ is in a codimension-2 stratum, then the order of $x$ is an invariant of the orbifold differential structure.  Consequently, the order of any point of $X$ is an invariant of the orbifold differential structure.
\end{theorem}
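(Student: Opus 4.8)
The plan is to reduce to the two-dimensional case, apply \hypref{p:milnor}{Proposition} in the spirit of \hypref{x:varieties6}{Example}, and then bootstrap to arbitrary points via \hypref{t:HND}{Theorem}; throughout I use that the orbifold topology (\hypref{p:diff orb}{Proposition}), the orbifold stratification (\hypref{c:strat}{Corollary}), and hence the codimension of each stratum are already invariants of the differential structure. So suppose first that $x$ lies in a codimension-$2$ stratum $M$. Since the orbifold stratification is smoothly locally trivial (\hypref{l:triviality}{Lemma}), near $x$ the differential space $X$ is diffeomorphic to $(M\cap U)\times S'$ with $S'$ a subcartesian space carrying a singleton stratum at a point $y$; because $\cod M=2$, \hypref{r:LDV}{Remark} forces the germ of $S'$ at $y$ to be that of $\RR^2/\Gamma$ with $\Gamma$ cyclic ($\ZZ_k$) or dihedral ($D_k$), and the order of $x$ then equals $k$, respectively $2k$. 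As transverse slices of a smoothly locally trivial stratification are determined up to diffeomorphism, the germ of $S'$ at $y$ is an invariant of the germ of $X$ at $x$, so we may replace $X$ by $\RR^2/\Gamma$ and $x$ by the origin; and since the stratification is an invariant we can detect whether $x$ lies in the closure of a codimension-$1$ stratum, which happens exactly in the dihedral case. It therefore suffices to recover $k$ from the germ at the origin of $\RR^2/\ZZ_k$, respectively $\RR^2/D_k$.

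For this we use the codimension of a germ. By \hypref{x:varieties3}{Example}, $\RR^2/\ZZ_k$ is the differential subspace $C_k=\{s^2+t^2=u^k,\ u\geq 0\}$ of $\RR^3$; when the equation $s^2+t^2=u^k$ already forces $u\geq 0$ this is exactly the zero set of $f=s^2+t^2-u^k$, whose only critical point is the origin, so \hypref{p:milnor}{Proposition} gives that $\cod(f)=k-1$ is an invariant and $k=\cod(f)+1$. In the remaining cyclic case, and in the dihedral case (where \hypref{x:varieties3}{Example} presents $\RR^2/D_k$ as a plane region $R_k$ rather than a level set), one passes first to an intrinsically defined differential subspace — for instance the singular locus $X_{\mathrm{sing}}$ formed by the positive-codimension strata — realizes the relevant germ as the zero set of a smooth function with an isolated critical point, and reads off $k$ from its codimension via \hypref{p:milnor}{Proposition} and the identity $|\Gamma|=\cod(f)+1$ found in \hypref{x:varieties6}{Example}. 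In every case the order of $x$ is an invariant of the orbifold differential structure.

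For an arbitrary point $x$ with isotropy group $\Gamma_x$ (whose order is the order of $x$), \hypref{r:isotropy group}{Remark} furnishes a chart $(\RR^n,\Gamma_x,\phi)$ with $\phi(0)=x$, so $U:=\phi(\RR^n)\cong\RR^n/\Gamma_x$ is a connected neighbourhood of $x$; as $\RR^n$ is simply connected, \hypref{r:covering space}{Remark} identifies it with the universal orbifold covering space of $U$ and $\Gamma_x$ with the orbifold fundamental group of $U$. By \hypref{t:HND}{Theorem} a presentation of this group — in particular its order — is built from the topology and stratification of $U$ and the orders of the points in its codimension-$2$ strata, the first two being invariants by \hypref{p:diff orb}{Proposition} and \hypref{c:strat}{Corollary} and the last by the codimension-$2$ case already settled (codimension $0$ and $1$ contributing orders $1$ and $2$). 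Hence $|\Gamma_x|$ is an invariant of the orbifold differential structure.

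The decisive difficulty is the recovery of $k$: the orbit spaces $\RR^2/\ZZ_k$ and $\RR^2/D_k$ are genuinely semi-algebraic — carved out of a polynomial level set by an inequality coming from $|z|^2\geq 0$ — so \hypref{p:milnor}{Proposition} does not apply to them verbatim, and the real work is to produce an honest smooth function with an isolated critical point whose zero set is precisely the germ in question, so that its codimension is at once finite and equal to $|\Gamma|-1$. By comparison the reduction to the plane is routine, modulo the standard fact that transverse slices of a smoothly locally trivial stratification are well defined up to diffeomorphism.
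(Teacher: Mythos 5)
Your proposal follows the paper's proof essentially step for step: reduce to a two\nobreakdash-dimensional transverse direction at $x$, invoke \hypref{t:LDV}{Theorem} to conclude $\Gamma_x$ is cyclic or dihedral according to whether the codimension\nobreakdash-2 stratum lies in the closure of a codimension\nobreakdash-1 stratum, recover $k$ from \hypref{p:milnor}{Proposition} together with the computation in \hypref{x:varieties6}{Example}, and then obtain the statement for arbitrary points from \hypref{t:HND}{Theorem} plus the observation that $\RR^n$ is the universal orbifold covering space of $\phi(\RR^n)\cong\RR^n/\Gamma_x$, so that $\Gamma_x$ is the orbifold fundamental group of a chart neighbourhood. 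The only notable mechanical difference is in the reduction to the plane: you appeal to uniqueness of transverse slices of a smoothly locally trivial stratification, whereas the paper works directly inside an orthogonal chart $(\RR^n,\Gamma_x,\phi)$ and splits $\RR^n=E\oplus F$ with $E=(\RR^n)^{\Gamma_x}$ mapping onto $M\cap U$ and $F\cong\RR^2$ carrying a faithful $\Gamma_x$-action with unique fixed point $0$; the paper's route avoids the slice-uniqueness statement you would otherwise need to justify. As for the ``decisive difficulty'' you flag: it is a fair criticism, but it is a criticism of the paper rather than a divergence from it --- at exactly that point the paper's proof consists of the same two citations you give, with no discussion of the fact that $C_k$ and $S_k$ are carved out by an inequality in addition to an equation (so are not literally level sets $f^{-1}(a)$ unless the inequality is redundant, e.g.\ $C_k$ for $k$ odd), nor of the fact that $\cod(f)$ a priori depends on the defining function and not only on its zero set. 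Your proposed repair --- pass to the intrinsically defined singular locus and exhibit an honest defining function with an isolated critical point --- is the natural one, but you leave it as incomplete as the paper does; if you want to go further than the source, that is the step to write out.
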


\begin{proof}
Recall that the orbifold stratification is an invariant of the differential structure by \hypref{c:strat}{Corollary}.  Let $n$ be the dimension of $X$, let $\Gamma_x$ be an isotropy group of $x$, and let $M$ be the stratum containing $x$.  By \hypref{l:triviality}{Lemma} there is an open neighbourhood $U$ of $x$, a smooth stratified subcartesian space $S'$ with a one-point stratum $\{y\}$, and a strata-preserving diffeomorphism $U\to (M\cap U)\times S'$ sending $x$ to $(x,y)$.  Let $(\RR^n,\Gamma_x,\phi)$ be a chart at $x$ such that $\phi(0)=x$, in which $\Gamma_x$ acts orthogonally.  Without loss of generality, assume that $U=\phi(\RR^n)$.  Let $E=(\RR^n)^{\Gamma_x}$ be the linear subspace of $\Gamma_x$-fixed points, and $F$ be an orthogonal complement to $E$.  Then since $\phi(0)=x$ and $0$ is a fixed point, we have that $\phi(E)=M\cap U$.  Since $\Gamma_x$ acts trivially on $E$, we have that $E\cong\RR^{n-2}$, and so $F\cong\RR^2$, on which $\Gamma_x$ acts with unique fixed point $0$.  Hence, by \hypref{t:LDV}{Theorem}, $\Gamma_x$ is a dihedral group (if $M$ is in the closure of a codimension-1 stratum) or a cyclic group (if $M$ is not in the closure of a codimension-1 stratum).  By \hypref{x:varieties6}{Example} and \hypref{p:milnor}{Proposition}, the order of $\Gamma_x$ can be obtained from invariants of the orbifold differential structure.

For the second statement, recall that the orbifold fundamental group of any orbifold can be obtained from the topology, stratification, and orders of points of codimension-2 strata by \hypref{t:HND}{Theorem}.  By \hypref{i:topology}{Item} of \hypref{p:diff orb}{Proposition}, \hypref{c:strat}{Corollary}, and what was proved above, we have that the orbifold fundamental group can be obtained from invariants of the orbifold differential structure.  From \hypref{i:simply connected}{Item} of \hypref{r:covering space}{Remark}, if $(\RR^n,\Gamma_x,\phi)$ is a chart of an orbifold $X$ in which $x=\phi(0)$ and $\Gamma_x$ is its isotropy group (which always exists by \hypref{r:isotropy group}{Remark}), then the orbifold fundamental group of $\phi(U)$ is isomorphic to $\Gamma_x$.  Thus, $|\Gamma_x|$ can be obtained from invariants of the orbifold differential structure.
\end{proof}

\begin{definition}[Link at a Point]\labell{d:link}
Let $X$ be an orbifold of dimension $n$, and let $x\in X$.  Let $\Gamma_x$ be an isotropy group of $x$ with associated chart $(\RR^n,\Gamma_x,\phi)$.  Without loss of generality, assume that $\Gamma_x\subset O(n)$.  Then $\SS^{n-1}$ is a $\Gamma_x$-invariant set.  Define the \emph{link} at $x$ to be the image $S:=\phi(\SS^{n-1})$.
\end{definition}

\begin{lemma}\labell{l:local normal form}
Let $X$ be an orbifold of dimension $n$ and let $x\in X$ with isotropy group $\Gamma_x$.  Let $(\RR^n,\Gamma_x,\phi)$ be a chart with $\phi(0)=x$ and such that $\Gamma_x\subset O(n)$.  Then there is a diffeomorphism $\Phi_S$ from the link $S$ at $x$ as a differential subspace of $X$ to $\SS^{n-1}/\Gamma_x$, where the action of $\Gamma_x$ on $\SS^{n-1}$ is the restriction of that on $\RR^n$ in the chart.  Moreover, $S$ has a smooth stratification given by the connected components of $S\cap M$ where $M$ runs over strata of $X$, and $\Phi_S$ preserves this with respect to the orbifold stratification on $\SS^{n-1}/\Gamma_x$.  Finally, $\Phi_S$ preserves the orders of points contained in the codimension-2 strata of $S$.
\end{lemma}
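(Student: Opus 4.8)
The plan is to construct $\Phi_S$ directly from the chart map $\phi$ and then transport the stratification and the orders across it. First, by \hypref{d:diff orbifold}{Definition}, \hypref{x:orbitspace}{Example}, and \hypref{p:diff orb}{Proposition}, the map $\phi$ induces a functional diffeomorphism $\bar\phi\colon\RR^n/\Gamma_x\to\phi(\RR^n)$, where $\RR^n/\Gamma_x$ carries the orbit-space differential structure and $\phi(\RR^n)$ the differential subspace structure it inherits as an open subset of $X$. Since $\Gamma_x\subset O(n)$, the unit sphere $\SS^{n-1}$ is a closed $\Gamma_x$-invariant submanifold of $\RR^n$; I would use the $\Gamma_x$-invariant tubular neighbourhood $\{x\mid\tfrac12<|x|<2\}$ with the equivariant retraction $x\mapsto x/|x|$, together with $\Gamma_x$-invariant bump functions (obtained by averaging over the finite group) and the fact that a function supported in the image of a chart extends by zero to all of $X$ (as in the proof of \hypref{t:orbits}{Theorem}), to identify the differential subspace structure that $\SS^{n-1}/\Gamma_x$ inherits from $\RR^n/\Gamma_x$ with the orbit-space (equivalently, orbifold) differential structure on $\SS^{n-1}/\Gamma_x$. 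Here one notes that $\Gamma_x$ acts effectively on $\SS^{n-1}$ --- a linear map fixing $\SS^{n-1}$ pointwise is the identity --- so $\SS^{n-1}/\Gamma_x$ is a genuine $(n-1)$-dimensional effective orbifold, with an atlas assembled from $\Stab_{\Gamma_x}(v)$-invariant slices at points $v\in\SS^{n-1}$. Restricting $\bar\phi$ to these differential subspaces then yields the desired functional diffeomorphism $\Phi_S\colon S\to\SS^{n-1}/\Gamma_x$, characterised by $\Phi_S\circ(\phi|_{\SS^{n-1}})=\pi$, where $\pi\colon\SS^{n-1}\to\SS^{n-1}/\Gamma_x$ is the quotient map.

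Next I would treat the stratification. By \hypref{d:strat2}{Definition} and \hypref{l:strat}{Lemma}, the orbifold stratification of $X$, restricted to $\phi(\RR^n)$, consists of the connected components of the images $\phi(C)$ as $C$ ranges over the connected components of the $\Gamma_x$-orbit-type sets $(\RR^n)_{(H)}$. The key observation is that $(\SS^{n-1})_{(H)}=\SS^{n-1}\cap(\RR^n)_{(H)}$, since the stabiliser in $\Gamma_x$ of a point of $\SS^{n-1}$ is unchanged on restricting the action from $\RR^n$; hence the orbit-type stratification of $\SS^{n-1}$ is precisely the set of connected components of the sets $\SS^{n-1}\cap(\RR^n)_{(H)}$, and, $\SS^{n-1}$ being $\Gamma_x$-invariant and $\pi$ being open and closed (a finite quotient), these descend to the orbifold stratification of $\SS^{n-1}/\Gamma_x$. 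Chasing connected components through the two quotient maps $\phi|_{\SS^{n-1}}$ and $\pi$ then shows that the family of connected components of the sets $S\cap M$, as $M$ runs over strata of $X$, coincides with the family of $\Phi_S$-preimages of strata of $\SS^{n-1}/\Gamma_x$. In particular this family is a smooth stratification (as $\Phi_S$ is a functional diffeomorphism and the orbifold stratification of $\SS^{n-1}/\Gamma_x$ is smooth by \hypref{l:strat}{Lemma}) and $\Phi_S$ is strata-preserving.

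For the orders of points, I would invoke \hypref{t:point order}{Theorem}: the order of a point in a codimension-$2$ stratum is an invariant of the orbifold differential structure, so, applying this to $S$ (with the orbifold structure transported along $\Phi_S$) and to $\SS^{n-1}/\Gamma_x$ and using that $\Phi_S$ is a functional diffeomorphism, corresponding points have equal order. Concretely, if $p=\phi(v)$ with $v\in\SS^{n-1}$ lies in a codimension-$2$ stratum of $S$, then $\Phi_S$ carries $p$ to $\pi(v)$, and both orders equal $|\Stab_{\Gamma_x}(v)|$ because the isotropy action of $\Stab_{\Gamma_x}(v)$ on $T_v\SS^{n-1}\cong\RR^{n-1}$ is the restriction of its orthogonal action on $T_v\RR^n\cong\RR^n$.

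I expect the first paragraph to be the main obstacle, specifically the identification of $\CIN(S)$ (the subspace structure from $X$) with the orbit-space differential structure on $\SS^{n-1}/\Gamma_x$: one must promote a $\Gamma_x$-invariant smooth function on $\SS^{n-1}$ to a function in $\CIN(X)$, which is where the $\Gamma_x$-invariant tubular neighbourhood, equivariant averaging, and extension-by-zero from a chart are all needed. The component-chasing in the second paragraph is a little intricate because individual components of an orbit-type set may be permuted by $\Gamma_x$, but is otherwise routine, and the third paragraph is essentially an application of \hypref{t:point order}{Theorem}.
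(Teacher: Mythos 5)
Your proposal is correct and follows essentially the same route as the paper: restrict the chart to the unit sphere to get $\Phi_S$, use that the orbit-type sets $(\RR^n)_{(H)}$ are scale-invariant (cones) so their intersections with $\SS^{n-1}$ are exactly the orbit-type sets of the restricted action, and note that stabilisers are unchanged so orders are preserved. The paper dispatches the identification of $\CIN(S)$ with the quotient differential structure on $\SS^{n-1}/\Gamma_x$ with a one-line appeal to the definition of a chart and a commutative diagram, whereas you correctly flag it as the nontrivial step and supply the invariant tubular neighbourhood/averaging/extension-by-zero argument that justifies it; that is a welcome elaboration rather than a divergence.
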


\begin{proof}
The existence of the diffeomorphism $\Phi_S$ follows from the definition of a chart and the following commutative diagram.
$$\xymatrix{
\SS^{n-1} \ar[r] \ar[d]_{\phi|_{\SS^{n-1}}} & \RR^n \ar[d]^{\phi} \\
S \ar[r] & \phi(\RR^n)\\
}$$
Let $M$ be a stratum of $X$, and let $y\in C\subseteq S\cap M$ where $C$ is a connected component of $S\cap M$.  Then there exists a subgroup $H$ of $\Gamma$ such that $y\in\phi(\RR^n_{(H)})$.  Note that $y\neq\phi(0)$.  Also, $\RR^n_{(H)}$ is a cone; that is, it is closed under scalar multiplication by non-zero real numbers.  Thus, $y\in\phi(\SS^{n-1}_{(H)})$, and since $y\in C$ is arbitrary, we have $C\subseteq\phi(\SS^{n-1}_{(H)})$.  For the opposite inclusion, fix $y\in\SS^{n-1}/\Gamma_x$ and let $H$ be a subgroup of $\Gamma_x$ such that $y\in\phi(\SS^{n-1}_{(H)})$.  Then, similar to the previous argument, $y\in\phi(\RR^n_{(H)})$.  Thus, there is a stratum $M$ of the orbifold stratification on $X$ such that if $C$ is the connected component of the stratum $\phi(\SS^{n-1}_{(H)})$ containing $y$, then $C\subseteq S\cap M$.  Finally, the fact that $\Phi_S$ preserves the orders of points follows immediately from the definitions.
\end{proof}

\begin{proof}[Proof of Main Theorem]
First, recall that the dimension of $X$ is a topological invariant (see \hypref{r:open dense stratum}{Remark}).  Moreover, this topology, the orbifold stratification, and the order of points in codimension-2 strata are all invariants of the differential structure $\CIN(X)$ by \hypref{i:topology}{Item} of \hypref{p:diff orb}{Proposition}, \hypref{c:strat}{Corollary}, and \hypref{t:point order}{Theorem} respectively.

If the dimension of $X$ is $0$, then $X$ is a countable set of points with the discrete topology, and the orbifold atlas is trivial.

Now, assume that $X$ has dimension $1$.  Then there are no codimension-2 strata, and applying \hypref{t:HND}{Theorem} to $X$ yields the following isotropy groups $\Gamma_x$ at each point $x\in X$:
\begin{enumerate}
\item If $x$ is in the open dense stratum, then $\Gamma_x=\{1\}$.
\item If $x$ is a codimension-1 stratum, then $\Gamma_x=\ZZ_2$.
\end{enumerate}

Thus, we can construct the following charts.

\begin{enumerate}
\item If $x$ is in a codimension-0 stratum, then there is an open neighbourhood $U$ of $x$ such that $U\cong\RR\cong\RR/\{1\}$; that is, $U$ is diffeomorphic to a connected open interval of $\RR$.  We thus take a chart $(\RR,\{1\},\phi)$ where $\phi$ is the diffeomorphism from $\RR$ onto $U$.
\item If $x$ is equal to a codimension-1 stratum, then there is only one non-trivial action of $\ZZ_2$ on $\RR$ given by $\pm1\cdot u=\pm u$.  So we must take as a chart near $x$ the triple $(U,\Gamma_x,\phi)=(\RR,\ZZ_2,\phi)$ where $\phi:\RR\to\phi(U)$ is the quotient map of this $\ZZ_2$-action.
\end{enumerate}

This completes the one-dimensional case.

Next, assume that the dimension of $X$ is $2$.  Applying \hypref{t:HND}{Theorem} to $X$ yields the following four possible isotropy groups $\Gamma_x$ at each point $x\in X$:

\begin{enumerate}
\item If $x$ is in a codimension-0 stratum, then $\Gamma_x=\{1\}$.
\item If $x$ is in a codimension-1 stratum, then $\Gamma_x=\ZZ_2$.
\item If $x$ is equal to a codimension-2 stratum that is in the closure of a codimension-1 stratum, and the order of $x$ is $2k$, then $\Gamma_x=D_k$.
\item If $x$ is equal to a codimension-2 stratum that is not in the closure of a codimension-1 stratum, and the order of $x$ is $k$, then $\Gamma_x=\ZZ_k$.
\end{enumerate}

We construct the following charts.

\begin{enumerate}
\item If $x$ is in a codimension-0 stratum, then there is an open neighbourhood $U$ of $x$ such that $U\cong\RR^2\cong\RR^2/\{1\}$.  Similar to what we did for the 1-dimensional case, take $(\RR^2,\{1\},\phi)$ to be a chart.
\item If $x$ is in a codimension-1 stratum, then there is an open neighbourhood $U$ of $x$ such that $U\cong\RR^2/\ZZ_2$ where $\ZZ_2$ acts by reflection through some line passing through the origin.
\item If $x$ is equal to a codimension-2 stratum that is in the closure of a codimension-1 stratum, and the order of $x$ is $2k$, then we take as a chart near $x$ the triple $(\RR^2,D_k,\phi)$ where $D_k$ acts on $\RR_2\cong\CC$ by reflections (see \hypref{x:varieties2}{Example}).
\item If $x$ is equal to a codimension-2 stratum that is not in the closure of a codimension-1 stratum, and the order of $x$ is $k$, then we take as a chart near $x$ the triple $(\RR^2,\ZZ_k,\phi)$ where $\ZZ_k$ acts on $\RR^2\cong\CC$ by rotations (see \hypref{x:varieties2}{Example}).
\end{enumerate}

By \hypref{t:LDV}{Theorem} and \hypref{r:LDV}{Remark}, this exhausts all the possible scenarios in the 2-dimensional case.

Now, as an induction hypothesis, assume that we can reconstruct an atlas for any orbifold of dimension $n$.  Let $X$ be an orbifold of dimension $n+1$.  Fix $x\in X$ and let $\Gamma_x$ be its isotropy group at $x$.  Our goal is to reconstruct a chart $(\RR^{n+1},\Gamma_x,\phi)$ about $x$ such that $\phi(0)=x$.  Let $S$ be the link at $x$.  By \hypref{l:local normal form}{Lemma}, there is a strata-preserving diffeomorphism that preserves the order of points on codimension-2 strata from $S$ to $\SS^n/\Gamma_x$ for some action of $\Gamma_x$ on $\SS^n$.  By our induction hypothesis, we now have enough information on $S$ to obtain an orbifold atlas on $S$.

Now, $S$ is a good orbifold.  Thus, by \hypref{i:univ cover exists}{Item} of \hypref{r:covering space}{Remark}, there is a simply-connected manifold that serves as a universal orbifold covering space for $S$, and this is unique up to equivariant diffeomorphism.  Hence (safely assuming that $n\geq 2$), $\SS^n$ is the universal orbifold covering space for $S$, with the action of $\Gamma_x$ given by deck transformations.  Extend this action to the unique orthogonal action of $\Gamma_x$ on $\RR^{n+1}$ such that
$$\gamma\cdot x:=\begin{cases}
0 & \text{ if $x=0$,}\\
|x|(\gamma\cdot\frac{x}{|x|}) & \text{ if $x\neq 0$.}\\
\end{cases}$$
This finishes the construction of the chart $(\RR^{n+1},\Gamma_x,\phi)$.  Since $x\in X$ is arbitrary, we are done.
\end{proof}

\section{Proof of Theorem A}\labell{s:functor}

The purpose of this section is to express the \hyperref[t:main]{Main Theorem} in terms of a functor.  We choose to use the weak 2-category of effective proper \'etale Lie groupoids with bibundles as arrows and isomorphisms of bibundles as 2-arrows.  We give the definition of these objects, arrows, and 2-arrows, but the reader should consult \cite{lerman} for a more detailed exposition.  Similar categories have been developed, toward which the \hyperref[t:main]{Main Theorem} could be tailored, but we do not do this here.  These categories include that of Hilsum-Skandalis \cite{HS}, as well as the calculus of fractions developed by Pronk \cite{pronk}.

Set $G=(G_1\toto G_0)$ as our notation for a Lie groupoid, with $s:G_1\to G_0$ and $t:G_1\to G_0$ the source and target maps, respectively.

\begin{definition}[Effective Proper \'Etale Lie Groupoid]\labell{d:groupoid}
\noindent
\begin{enumerate}
\item Let $G$ be a Lie groupoid.  Then $G$ is \emph{\'etale} if the source and target maps are local diffeomorphisms.

\item Let $M$ be a manifold.  Denote by $\Gamma(M)$ the topological groupoid with objects the set of points of $M$, and arrows the space of germs of (local) diffeomorphisms equipped with the sheaf topology.  This is an \'etale groupoid in the topological sense; \emph{i.e.} the source and target maps are local homeomorphisms.  It attains a smooth structure via these local homeomorphisms.

\item Let $G$ be an \'etale Lie groupoid, and let $\Gamma(G_0)$ be the groupoid of germs associated to $G_0$.  Then for each arrow $(g:x\to y)\in G_1$ there exist an open neighbourhood $U$ of $g$ and a diffeomorphism $$\phi_g=t|_U\circ (s|_U)^{-1}.$$  The germ of $\phi_g$ is an element of $\Gamma(G_0)$, and we have a smooth functor $\gamma:G\to\Gamma(G_0)$, sending $g$ to $\phi_g$, and objects to themselves.  $G$ is \emph{effective} if $\gamma$ is faithful.

\item A Lie groupoid $G$ is \emph{proper} if the smooth functor $(s,t):G_1\to G_0\times G_0$ is a proper map between manifolds.
\end{enumerate}
\end{definition}

\begin{remark}\labell{r:groupoid}
Any effective proper \'etale Lie groupoid is Morita equivalent to the effective groupoid associated to an orbifold constructed using pseudogroups.  This construction yields a one-to-one correspondence between orbifolds in the classical sense, and Morita equivalence classes of effective proper \'etale Lie groupoids.  See \cite{MM} for definitions, details, and a proof (\cite[Theorem 5.32]{MM}).  The important point for our purposes is that given one of these groupoids $G_1\toto G_0$, the orbit space $G_0/G_1$ is the underlying set of the orbifold.
\end{remark}

\begin{definition}[Bibundle]\labell{d:bibundle}
\noindent
\begin{enumerate}
\item
Let $G=(G_1\toto G_0)$ and $H=(H_1\toto H_0)$ be Lie groupoids. Then a \emph{bibundle} $P:G\to H$ is a manifold $P$ equipped with a left groupoid action of $G$ with anchor map $a_L:P\to G_0$, and a right groupoid action of $H$ with anchor map $a_R:P\to H_0$ such that the following are satisfied.
\begin{enumerate}
\item The two actions commute.
\item $a_L:P\to G_0$ is a principal (right) $H$-bundle.
\item $a_R$ is $G$-invariant.
\end{enumerate}

$$\xymatrix{
G_1 \ar@<0.5ex>[d] \ar@<-0.5ex>[d] & P \ar[dl]_{a_L} \ar[dr]^{a_R} & H_1 \ar@<0.5ex>[d] \ar@<-0.5ex>[d] \\
G_0 & & H_0 \\
}$$

\item Let $G$ and $H$ be Lie groupoids, and let $P:G\to H$ and $Q:G\to H$ be bibundles between them.  An \emph{isomorphism of bibundles} $\alpha:P\to Q$ is a diffeomorphism that is ($G$-$H$)-equivariant; that is, $\alpha(h\cdot p\cdot g)=h\cdot \alpha(p)\cdot g.$

\item Let $G=(G_1\toto G_0)$ and $H=(H_1\toto H_0)$ be Lie groupoids, and let $P:G\to H$ be a bibundle between them.  $P$ is \emph{invertible} if its right anchor map $a_R:P\to H_0$ makes $P$ into a principal (left) $G$-bundle, defined similarly to a principal (right) bundle.  In this case, we can construct a bibundle $P^{-1}:H\to G$ by switching the anchor maps, inverting the left $G$-action into a right $G$-action, and doing the opposite for the $H$-action.  Then, $P\circ P^{-1}$ is isomorphic to the bibundle corresponding to the identity map on $H$, and $P^{-1}\circ P$ isomorphic to the bibundle representing the identity map on $G$.  In the case that $G$ and $H$ admit an invertible bibundle between them, they are called \emph{Morita equivalent} groupoids.
\end{enumerate}
\end{definition}

\begin{definition}[Weak 2-Category $\orb$]\labell{d:orb}
Lie groupoids with bibundles as arrows and isomorphisms of bibundles as 2-arrows form a weak 2-category.  See \cite{lerman} for more details.  Effective proper \'etale Lie groupoids form a full (weak) subcategory, and will be denoted by $\orb$.  Many view this (or slight modifications to this definition) to be ``the'' category of effective orbifolds.
\end{definition}

\begin{lemma}\labell{l:arrows}
Let $G_1\toto G_0$ and $H_1\toto H_0$ be two effective proper \'etale Lie groupoids, and let $P$ be a bibundle between them.  Then $P$ descends to a unique smooth map $\bar{P}:G_0/G_1\to H_0/H_1$ such that the following diagram commutes.

$$\xymatrix{
G_1 \ar@<0.5ex>[d] \ar@<-0.5ex>[d] & P \ar[dl]_{a_L} \ar[dr]^{a_R} & H_1 \ar@<0.5ex>[d] \ar@<-0.5ex>[d] \\
G_0 \ar[d]_{\pi_G} & & H_0 \ar[d]^{\pi_H} \\
G_0/G_1 \ar[rr]_{\bar{P}} & & H_0/H_1
}$$

Moreover, if $P$ is a Morita equivalence, then $\bar{P}$ is a diffeomorphism.  Finally, if $Q$ is another bibundle between $G_1\toto G_0$ and $H_1\toto H_0$ that is isomorphic to $P$, then $\bar{P}=\bar{Q}$.
\end{lemma}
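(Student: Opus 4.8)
The plan is to construct $\bar P$ pointwise using surjectivity of the left anchor map, check that it is well defined, read off uniqueness and commutativity from the construction, verify functional smoothness by descending functions along $a_L$, and then deduce the Morita and isomorphism statements from the resulting explicit formula.

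\textbf{Construction, uniqueness, commutativity.} Since $a_L\colon P\to G_0$ is a principal (right) $H$-bundle it is a surjective submersion, so for $g\in G_0$ I pick $p\in a_L^{-1}(g)$ and set $\bar P(\pi_G(g)):=\pi_H(a_R(p))$. Independence of choices must be checked twice. If $p'\in a_L^{-1}(g)$ is another lift then $p'=p\cdot h$ for some $h\in H_1$, since the fibres of $a_L$ are precisely the right $H$-orbits; then $a_R(p')=s(h)$ and $a_R(p)=t(h)$ lie in one $H_1$-orbit, so $\pi_H(a_R(p'))=\pi_H(a_R(p))$. If $\pi_G(g)=\pi_G(g')$, choose $\gamma\in G_1$ from $g$ to $g'$; then $\gamma\cdot p$ is defined with $a_L(\gamma\cdot p)=g'$, and $G$-invariance of $a_R$ gives $a_R(\gamma\cdot p)=a_R(p)$, so the value at $\pi_G(g')$ agrees with that at $\pi_G(g)$. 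By construction $\bar P\circ\pi_G\circ a_L=\pi_H\circ a_R$, and any map making the diagram commute must obey this same formula; since $\pi_G\circ a_L$ is surjective, this gives uniqueness.

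\textbf{Smoothness.} By \hypref{r:groupoid}{Remark} the two orbit spaces carry the orbifold differential structures of \hypref{d:diff orbifold}{Definition}, and by \hypref{p:diff orb}{Proposition} a function lies in $\CIN(G_0/G_1)$ exactly when its pullback to $G_0$ is a $G_1$-invariant smooth function, and likewise for $H$. Given $f\in\CIN(H_0/H_1)$, pulling back the commutativity relation yields $a_L^*(\pi_G^*\bar P^*f)=a_R^*(\pi_H^*f)$. The right-hand side is smooth on $P$ and constant along the fibres of $a_L$ (right $H$-orbits, on which $\pi_H^*f$ is invariant), hence, $a_L$ being a surjective submersion, it descends to a unique smooth function on $G_0=P/H$; that function is $\pi_G^*\bar P^*f$, which is automatically $G_1$-invariant as the pullback of a function on $G_0/G_1$. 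Thus $\bar P^*f\in\CIN(G_0/G_1)$ and $\bar P$ is functionally smooth.

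\textbf{Morita equivalence and independence of representative.} If $P$ is invertible then $a_R\colon P\to H_0$ is a principal left $G$-bundle, hence surjective, so $\bar P$ is onto; and if $\bar P(\pi_G(g_1))=\bar P(\pi_G(g_2))$, I lift to $p_i\in a_L^{-1}(g_i)$, adjust $p_1$ within its $a_L$-fibre so that $a_R(p_1)=a_R(p_2)$, and then $p_1,p_2$ lie in one left $G$-orbit (a fibre of $a_R$), forcing $\pi_G(g_1)=\pi_G(g_2)$; so $\bar P$ is a bijection. The inverse bibundle $P^{-1}\colon H\to G$ of \hypref{d:bibundle}{Definition} has $a_L$ and $a_R$ interchanged, so the formula above shows $\overline{P^{-1}}$ is a two-sided inverse of $\bar P$; since $\overline{P^{-1}}$ is smooth by the previous step, $\bar P$ is a functional diffeomorphism. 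Finally, an isomorphism of bibundles $\alpha\colon P\to Q$ intertwines both anchor maps (apply equivariance to identity arrows), so $\alpha$ carries $(a_L^P)^{-1}(g)$ into $(a_L^Q)^{-1}(g)$ and the formula gives $\bar Q(\pi_G(g))=\pi_H(a_R^P(p))=\bar P(\pi_G(g))$, i.e.\ $\bar P=\bar Q$. The only step with genuine content is smoothness: one must pin down that both orbit spaces carry the orbifold differential structure and then run the descent-along-$a_L$ argument; everything else is careful bookkeeping with the left/right action conventions and with the direction of $P^{-1}$.
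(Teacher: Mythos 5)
Your proposal is correct and follows essentially the same route as the paper: define $\bar P$ on a class $[g]$ via a point (the paper uses a local section) of the fibre $a_L^{-1}(g)$, check independence of the lift using the principal $H$-bundle structure and of the representative using $G$-invariance of $a_R$, get uniqueness from surjectivity of $\pi_G\circ a_L$, and prove smoothness by descending $a_R^*\tilde f$ along the principal $H$-bundle $a_L$ to a $G_1$-invariant function on $G_0$. Your treatment of the Morita case is slightly more detailed than the paper's (which just asserts it "follows immediately"), but the argument is the same in substance.
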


\begin{proof}
Fix $x\in G_0$ and denote by $[x]$ the point $\pi_G(x)$.  Then define $$\bar{P}([x]):=\pi_H\circ a_R\circ\sigma(x)$$ where $\sigma$ is a smooth local section of $a_L$ about $x$.  Such a local section exists since $a_L$ is a surjective submersion, by definition of a principal $H$-bundle.

We claim that $\bar{P}$ is independent of the local section chosen, as well as the representative $x$.  Indeed, let $y\in G_0$ be another representative of $[x]$.  Then there exists $g\in G_1$ such that $s(g)=x$ and $t(g)=y$.  So, $a_L(g\cdot\sigma(x))=y$, and hence $g\cdot\sigma(x)\in a_L^{-1}(y)$.  Let $\sigma'$ be a local section of $a_L$ about $y$. Since $a_L:P\to G_0$ is a principal $H$-bundle, there exists $h\in H_1$ such that $(g\cdot\sigma(x))\cdot h=\sigma'(y)$.  Since the $G$- and $H$-actions on $P$ commute and $a_R$ is $G$-invariant, it follows that $a_R(\sigma'(y))=s(h)$. Since $a_R(\sigma(x))=t(h)$, we have $$\pi_H(a_R(\sigma(x)))=\pi_H(a_R(\sigma'(y))).$$

To show uniqueness, consider $p\in P$.  In order for the diagram above to commute, we require that $\pi_G(a_L(p))$ be sent to $\pi_H(a_R(p))$.  This defines a unique map, which is equal to $\bar{P}$.

Denote the quotient differential structures on $G_0/G_1$ and $H_0/H_1$ by $\CIN(G_0/G_1)$ and $\CIN(H_0/H_1)$, respectively.  Denote by $\CIN(G_0)^{G_1}$ and $\CIN(H_0)^{H_1}$ the spaces of smooth invariant functions on $G_0$ and $H_0$, respectively.  Fix $f\in\CIN(H_0/H_1)$.  Then, there exists $\tilde{f}\in\CIN(H_0)^{H_1}$ such that $\tilde{f}=\pi_H^*f$.  By definition of a right $H$-action, $a_R^*\tilde{f}$ is $H$-invariant on $P$.  Since $a_L:P\to G_0$ is a principal $H$-bundle, $a_R^*\tilde{f}$ descends to a smooth function $\tilde{f}'\in\CIN(G_0)$: $$a_L^*\tilde{f}'=a_R^*\tilde{f}.$$  By definition of a left $G$-action, and using the fact that $a_R$ is $G$-invariant, we obtain that $\tilde{f}'\in\CIN(G_0)^{G_1}$.  Therefore, $\tilde{f}'$ descends to a function $f'\in\CIN(G_0/G_1)$, and $f'=\bar{P}^*f$.

Next, $P$ is a Morita equivalence if and only if $P$ is invertible; that is, $a_R:P\to H_0$ is a principal $G$-bundle.  It follows immediately that in this case, $\bar{P}$ is a diffeomorphism.

Finally, the fact that isomorphic bibundles $P$ and $Q$ descend to the same smooth map $\bar{P}=\bar{Q}$ between orbit spaces comes immediately from the uniqueness of $\bar{P}$ and the fact that $\alpha$ is ($G$-$H$)-equivariant.
\end{proof}

\begin{proof}[Proof of Theorem A]
We define a functor $F:\orb\to\diffsp$ as follows.  Let $G_1\toto G_0$ be an effective proper \'etale Lie groupoid.  Then $F(G_1\toto G_0)$ is the orbit space $G_0/G_1$ equipped with the quotient differential structure.  Let $G_1\toto G_0$ and $H_1\toto H_0$ be two effective proper \'etale Lie groupoids, and let $P$ be a bibundle between them.  Then define $F(P)$ to be $\bar{P}$ as defined in \hypref{l:arrows}{Lemma}.  $F$ trivialises 2-arrows by \hypref{l:arrows}{Lemma}.

To show that $F$ is a functor, note that if $P:(G_1\toto G_0)\to(G_1\toto G_0)$ is the identity bibundle, then $\bar{P}$ is the identity map on $G_0/G_1$.  We also need to show that $F$ respects composition.  Let $G_1\toto G_0$, $H_1\toto H_0$, and $K_1\toto K_0$ be effective proper \'etale Lie groupoids, and let $P:(G_1\toto G_0)\to(H_1\toto H_0)$ and $Q:(H_1\toto H_0)\to(K_1\toto K_0)$ be bibundles.  The composition of $P$ and $Q$ is defined to be the quotient $Q\circ P:=(P\times_{H_0} Q)/H_1$ where $P\times_{H_0} Q$ is the fibred product with respect to anchor maps $a^P_R:P\to H_0$ and $a^Q_L:Q\to H_0$, on which $H_1\toto H_0$ acts diagonally.  Note that $F(Q\circ P)=\overline{Q\circ P}$ is the unique map making the following diagram commute.

$$\xymatrix{
 & P\times_{H_0}Q \ar[dl]_{\pr_1} \ar[dr]^{\pr_2} & \\
P \ar[d]_{\pi_G\circ a_L^P} & & Q \ar[d]^{\pi_K\circ a^Q_R} \\
G_0/G_1 \ar[rr]_{\overline{Q\circ P}} & & K_0/K_1 \\
}$$

To show that $\overline{Q\circ P}=\bar{Q}\circ\bar{P}$ it is enough to show that for any $(p,q)\in P\times_{H_0}Q$, we have $$\bar{Q}\circ\bar{P}(\pi_G\circ a_L^P(p))=\pi_K\circ a_R^Q(q).$$  But this reduces to showing that $$\pi_H\circ a_R^P(p)=\pi_H\circ a_L^Q(q),$$  and this is automatic by definition of $P\times_{H_0}Q$.  We have shown that $F$ is a functor.

Now, let $G_1\toto G_0$ and $H_1\toto H_0$ be effective proper \'etale Lie groupoids.  Then $X_G:=G_0/G_1$ and $X_H:=H_0/H_1$ are naturally equipped with orbifold atlases.  Assume that $(X_G,\CIN(X_G))$ and $(X_H,\CIN(X_H))$ are diffeomorphic as differential spaces.  Without loss of generality, we may identify the underlying sets via this diffeomorphism.  By the \hyperref[t:main]{Main Theorem}, the orbifold atlases for $X_G$ and $X_H$ can be reconstructed from $\CIN(X_G)$ and $\CIN(X_H)$, and these orbifold atlases are equivalent since they are constructed out of the same invariants of differential spaces.  We conclude that $G_1\toto G_0$ and $H_1\toto H_0$ are Morita equivalent; that is, isomorphic in $\orb$.
\end{proof}

\begin{remark}\labell{r:arrows}
\noindent
\begin{enumerate}
\item Note that while composition of bibundles is only weakly associative, by \hypref{l:arrows}{Lemma} $F$ carries this weak associativity to true associativity.
\item $F$ is neither full nor faithful.  See \hypref{x:not full nor faithful}{Example}.
\item In the proof above, we did not use the fact that the Lie groupoids are effective proper \'etale to show that $F$ is a functor; this was only used to show that $F$ is essentially injective.  Indeed, $F$ is a restriction of a functor from the weak 2-category of Lie groupoids to differential spaces.
\end{enumerate}
\end{remark}

\section{Proof of Theorem B}\labell{s:diffeology}

This section is designed for readers with some familiarity with the category $\diffeol$ of diffeological spaces.  The main resource on diffeology is the book by Iglesias-Zemmour \cite{iglesias}, although for purposes of this section regarding diffeological orbifolds, the required details appear in \cite{IZKZ}.  The purpose of this section is as follows.  Iglesias-Zemmour, Karshon, and Zadka in \cite{IZKZ} prove that there is a one-to-one correspondence between orbifolds in the classical sense and diffeological orbifolds.  We tailor this result into a functor $G:\orb\to\diffeol$ that is essentially injective on objects, which is \hyperref[t:IZKZ]{Theorem B}.  We give two proofs that this functor is essentially injective.  The first comes directly from the result of Iglesias-Zemmour, Karshon, and Zadka.  For the second, we introduce a functor $\mathbf{\Phi}:\diffeol\to\diffsp$ that sends a diffeological space to its underlying set equipped with the ring of diffeologically smooth real-valued functions.  We show that $F=\mathbf{\Phi}\circ G$.  By the \hyperref[t:main]{Main Theorem} $F$ is essentially injective, and so it follows that $G$ is as well.  The functor $\mathbf{\Phi}$ is studied in \cite[Chapter 2]{watts}, as well as \cite{BIZKW}, and more details about it can be found there.

\begin{definition}[Diffeological Orbifold]\labell{d:diffeol orb}
A \emph{diffeological orbifold} is a diffeological space that is locally diffeologically diffeomorphic to quotient diffeological spaces of the form $\RR^n/\Gamma$, where $\Gamma\subset\GL(\RR^n)$ is a finite subgroup.  (see \cite[Definition 6]{IZKZ}.)
\end{definition}

\begin{proof}[Proof of Theorem B]
Similar to the functor $F:\orb\to\diffsp$ defined in Theorem A, $G$ is the restriction of a functor from the weak 2-category of Lie groupoids with bibundles as arrows and isomorphisms of bibundles as 2-arrows to diffeological spaces.  See \cite[Section 4]{watts-gpds} for details on this functor between Lie groupoids and diffeological spaces.

The fact that $G$ is essentially injective follows from the result of Iglesias-Zemmour - Karshon - Zadka (see Proposition 38, Theorem 39 and Theorem 46 of \cite{IZKZ}) and from \hypref{r:groupoid}{Remark}.
\end{proof}

The functor $G$ is neither faithful nor full, as the following example illustrates.

\begin{example}\labell{x:not full nor faithful}
These examples are due to Iglesias-Zemmour, Karshon, and Zadka and appear as Examples 24 and 25 of \cite{IZKZ}.  Let $\rho_n:\RR\to[0,1]$ be a smooth function with non-empty support inside $[\frac{1}{n+1},\frac{1}{n}]$.  Let $\sigma=(\sigma_1,\sigma_2,...)\in\{-1,1\}^\NN$, and define $f_\sigma:\RR\to\RR$ to be the smooth function $$f_\sigma(x):=\begin{cases}
\sigma_n e^{-1/x}\rho_n(x) & \text{ if $\frac{1}{n+1}<x\leq\frac{1}{n}$ with }n\in\NN,\\
0 & \text{ if $x>1$ or $x\leq 0$}.\\
\end{cases}$$
For any $\sigma$, the function $f_\sigma$ descends to the same diffeologically smooth function $f:\RR\to\RR/\ZZ_2$ (where $\ZZ_2$ acts by reflection).  Thus, the functor $G$ is not faithful.

Next, set $r=\sqrt{x^2+y^2}$ for $(x,y)\in\RR^2$.  Define $g:\RR^2\to\RR^2$ to be the smooth function $$g(x,y):=\begin{cases}
e^{-r}\rho_n(r)(r,0) & \text{ if $\frac{1}{n+1}<r\leq\frac{1}{n}$ and $n$ is even},\\
e^{-r}\rho_n(r)(x,y) & \text{ if $\frac{1}{n+1}<r\leq\frac{1}{n}$ and $n$ is odd},\\
0 & \text{ if $r>1$ or $r=0$ }.\\
\end{cases}
$$  Then, for any integer $k\geq 2$, the function $g$ descends to a diffeologically smooth function $\bar{g}:\RR^2/\ZZ_k\to\RR^2/\ZZ_k$ (where $\ZZ_k$ acts by rotation).  While $\bar{g}$ has a smooth lift $\RR^2\to\RR^2$, this lift is $h_n$-equivariant when restricted to the annulus $\frac{1}{n+1}<r\leq \frac{1}{n}$, where $h_n:\ZZ_k\to\ZZ_k$ is a group homomorphism.  In particular, if $n$ is even, then $h_n$ is the trivial homomorphism; whereas if $n$ is odd, then $h_n$ must be the identity.  Thus, there certainly is no functor, nor even a bibundle, between the groupoid $\ZZ_k\times\RR^2\toto\RR^2$ and itself that corresponds to $f$.  Thus $G$ is not full.
\end{example}

\begin{definition}[The Functor $\mathbf{\Phi}$]\labell{d:phi}
Let $(X,\mathcal{D})$ be a diffeological space.  Define $\Phi\mathcal{D}$ by $$\Phi\mathcal{D}:=\{f:X\to\RR\mid f\circ p\text{ is smooth }\forall p\in\mathcal{D}\}.$$  Then, $\Phi\mathcal{D}$ is a differential structure on $X$ (see \cite[Lemma 2.42]{watts}).  This extends to a functor $\mathbf{\Phi}:\diffeol\to\diffsp$ which sends diffeologically smooth maps to themselves (see the proof of \cite[Theorem 2.48]{watts}).  Note that $\Phi\mathcal{D}$ is just the ring of diffeologically smooth functions of $(X,\mathcal{D})$.
\end{definition}

\begin{proposition}[$F=\mathbf{\Phi}\circ G$]\labell{p:factor through diffeol}
The functor $F:\orb\to\diffsp$ is equal to the composition $\mathbf{\Phi}\circ G$.
\end{proposition}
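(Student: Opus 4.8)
The plan is to verify that $F$ and $\mathbf{\Phi}\circ G$ agree as functors out of the weak $2$-category $\orb$ by checking them on $2$-arrows, on arrows, and on objects. On $2$-arrows there is nothing to do: $F$ trivialises isomorphisms of bibundles by \hypref{l:arrows}{Lemma}, and so does $G$ (see \cite[Section~4]{watts-gpds}), hence so does $\mathbf{\Phi}\circ G$. On arrows, a bibundle $P\colon(G_1\toto G_0)\to(H_1\toto H_0)$ is carried by $F$ to the descended map $\bar P$ of \hypref{l:arrows}{Lemma}, which is the unique map of orbit spaces sending $[a_L(p)]$ to $[a_R(p)]$; the functor $G$ of \cite{watts-gpds} descends $P$ to the orbit spaces by exactly the same prescription, and therefore to the same underlying map of sets $\bar P$. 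Since $\mathbf{\Phi}$ alters neither underlying sets nor underlying maps, all of the content reduces to the equality of differential structures assigned to a single object.

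So I would fix an effective proper \'etale Lie groupoid $G_1\toto G_0$ with orbit map $\pi\colon G_0\to G_0/G_1$, and compare the two structures on the orbit space. On the one hand, $F(G_1\toto G_0)=(G_0/G_1,\CIN(G_0/G_1))$, where by \hypref{d:diff quotient}{Definition} a function $f\colon G_0/G_1\to\RR$ lies in $\CIN(G_0/G_1)$ precisely when $\pi^*f\in\CIN(G_0)$; note that $\pi^*f$ is automatically constant on $G_1$-orbits, so this is equivalent to $\pi^*f\in\CIN(G_0)^{G_1}$, and conversely every invariant smooth function on $G_0$ descends. On the other hand, $\mathbf{\Phi}(G(G_1\toto G_0))=(G_0/G_1,\Phi\mathcal D)$, where $\mathcal D$ is the quotient (pushforward) diffeology along $\pi$; by \hypref{d:phi}{Definition}, $f\in\Phi\mathcal D$ iff $f\circ p$ is smooth for every plot $p$ of $\mathcal D$, and since $\mathcal D$ is generated by $\pi$ this holds iff $(\pi^*f)\circ q=f\circ\pi\circ q$ is smooth for every smooth map $q\colon V\to G_0$ from an open subset $V$ of a Euclidean space.

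The key step is the elementary observation that a real-valued function $h$ on a manifold is smooth if and only if $h\circ q$ is smooth for every smooth $q\colon V\to G_0$ as above --- equivalently, a manifold's ring of diffeologically smooth functions is its ordinary ring of smooth functions (take the $q$'s to run over inverses of charts of $G_0$). Applying this with $h=\pi^*f$ yields $f\in\Phi\mathcal D\iff\pi^*f\in\CIN(G_0)\iff f\in\CIN(G_0/G_1)$, so $\Phi\mathcal D=\CIN(G_0/G_1)$. Together with the reduction of the first paragraph, this gives $F=\mathbf{\Phi}\circ G$.

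The one genuinely substantive point --- and the step I expect to be the main obstacle --- is confirming that the diffeology the functor $G$ of \cite{watts-gpds} assigns to an effective proper \'etale Lie groupoid really is the pushforward of the manifold diffeology of $G_0$ along $\pi$ (equivalently, the quotient diffeology for the equivalence relation $x\sim y$ iff there is an arrow $x\to y$ in $G_1$); granting that, the rest is formal. If that identification is inconvenient to handle directly at the level of groupoids, I would instead run the argument locally: the two conditions ``$\pi^*f\in\CIN(G_0)$'' and ``$f\in\Phi\mathcal D$'' each satisfy the Locality Condition of \hypref{d:diff space}{Definition}, so it suffices to check the equality on the images of orbifold charts, where $G_0/G_1$ is locally $\RR^n/\Gamma$; there \hypref{x:orbitspace}{Example} identifies $\CIN(\RR^n/\Gamma)$ with the descent of $\CIN(\RR^n)^\Gamma$, and the same Euclidean-smoothness argument identifies $\Phi$ of the quotient diffeology of $\RR^n/\Gamma$ with the very same ring.
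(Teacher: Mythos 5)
Your argument is correct, and its main line is genuinely different from the paper's. The paper reduces at once to a single object and works \emph{locally}: it invokes Proposition 38, Theorem 39, and Theorem 46 of \cite{IZKZ} to identify the local diffeomorphisms defining the diffeological orbifold with the classical orbifold charts $(\RR^n,\Gamma,\phi)$, and then checks chart by chart that a function lies in $\Phi\mathcal{D}$ iff it lifts to a $\Gamma$-invariant smooth function on $\RR^n$, i.e.\ lies in the orbifold differential structure $\CIN(X)$ of \hypref{d:diff orbifold}{Definition}; locality of smoothness finishes the job. This is exactly the fallback you sketch in your last paragraph. Your primary argument instead stays \emph{global} on $G_0/G_1$: since the quotient diffeology is generated by $\pi$ and diffeological smoothness on the manifold $G_0$ is ordinary smoothness, you get $f\in\Phi\mathcal{D}\iff\pi^*f\in\CIN(G_0)\iff f\in\CIN(G_0/G_1)$ directly from \hypref{d:diff quotient}{Definition} and \hypref{d:phi}{Definition}, with no appeal to \cite{IZKZ} or to orbifold charts at all. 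What your route buys is generality and a cleaner match with how $F$ and $G$ are actually defined (both as restrictions of functors on all Lie groupoids, cf.\ \hypref{r:arrows}{Remark}); it proves $F=\mathbf{\Phi}\circ G$ on the whole weak 2-category of Lie groupoids, not just $\orb$. What the paper's route buys is the explicit identification of $\Phi\mathcal{D}$ with the chart-defined orbifold differential structure $\CIN(X)$, which is the form in which the Main Theorem is applied. The one point you rightly flag as substantive --- that $G$ assigns the pushforward of the manifold diffeology of $G_0$ along $\pi$ --- is indeed just the definition of the functor in \cite[Section 4]{watts-gpds}, so there is no gap there; your explicit treatment of arrows and 2-arrows is also a small completeness gain over the paper, which handles that reduction in one sentence.
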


\begin{proof}
We need only show that given a diffeological orbifold $(X,\mathcal{D})$, that $\Phi\mathcal{D}$ is equal to the orbifold differential structure on $X$.  First, we note that the topology induced by the diffeology on $X$ is equal to the standard orbifold topology (see \cite[Article 2.12]{iglesias}), which in turn is equal to the functional topology induced by $\CIN(X)$ (see \hypref{i:topology}{Item} of \hypref{p:diff orb}{Proposition}).

Now, from Proposition 38, Theorem 39, and Theorem 46 of \cite{IZKZ} we have that the local diffeomorphisms defining the diffeological orbifold structure are exactly the charts of the corresponding orbifold in the sense of \hypref{d:orbifold}{Definition}.  Let $f\in\Phi\mathcal{D}$.  Then, locally where $(X,\mathcal{D})$ is diffeologically diffeomorphic to $\RR^n/\Gamma$, it follows from the definition of a quotient diffeology that $f$ will restrict and lift to a $\Gamma$-invariant function on $\RR^n$.   But this is exactly the pullback of $f$ via an orbifold chart.  Thus, $f\in\CIN(X)$.  In the reverse direction, if $f\in\CIN(X)$, then locally at a chart of the form $(\RR^n,\Gamma,\phi)$, which exists at every point by \hypref{r:isotropy group}{Remark}, we have $f$ restricts and lifts to a $\Gamma$-invariant function on $\RR^n$.  Hence, since the quotient map is a plot of $\mathcal{D}$, it descends to a (local) diffeologically smooth function; that is, a function in $\Phi\mathcal{D}$.  Since smoothness is a local property, the result follows.
\end{proof}

\begin{corollary}\labell{c:factor through diffeol}
$G$ is an essentially injective functor.
\end{corollary}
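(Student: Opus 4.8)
The plan is to obtain this as a purely formal consequence of \hypref{p:factor through diffeol}{Proposition} together with the essential injectivity of $F$ established in the proof of \hyperref[t:gpds]{Theorem A}. First I would recall that $\mathbf{\Phi}\colon\diffeol\to\diffsp$ is a genuine functor (see \hypref{d:phi}{Definition}), sending diffeologically smooth maps to themselves; in particular, like any functor, it preserves isomorphisms, so if two diffeological spaces are diffeologically diffeomorphic, then their images under $\mathbf{\Phi}$ are functionally diffeomorphic differential spaces, via the images of the chosen diffeomorphism and its inverse.

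Next, given effective proper \'etale Lie groupoids $G_1\toto G_0$ and $H_1\toto H_0$ with $G(G_1\toto G_0)\cong G(H_1\toto H_0)$ in $\diffeol$, I would apply $\mathbf{\Phi}$ to this diffeomorphism and invoke the identity $F=\mathbf{\Phi}\circ G$ from \hypref{p:factor through diffeol}{Proposition} to conclude that $F(G_1\toto G_0)\cong F(H_1\toto H_0)$ in $\diffsp$. Then \hyperref[t:gpds]{Theorem A} applies verbatim and yields that $G_1\toto G_0$ and $H_1\toto H_0$ are Morita equivalent, which is precisely the assertion that $G$ is essentially injective on objects. This is the ``second proof'' of the essential injectivity of \hyperref[t:IZKZ]{Theorem B} promised in the introduction.

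There is no genuine obstacle to surmount here: all the substantive content has already been absorbed into \hypref{p:factor through diffeol}{Proposition} (whose proof identifies the local diffeomorphisms defining a diffeological orbifold with the charts of the corresponding classical orbifold, using \cite{IZKZ}) and into the \hyperref[t:main]{Main Theorem} underlying \hyperref[t:gpds]{Theorem A}. The only point worth stating with care is that $\mathbf{\Phi}$ is functorial on the nose, so that it transports the chosen diffeological diffeomorphism to a functional diffeomorphism rather than merely to a smooth bijection.
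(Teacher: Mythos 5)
Your argument is correct and is exactly the paper's: the published proof simply cites Proposition~\ref{p:factor through diffeol} ($F=\mathbf{\Phi}\circ G$) together with the essential injectivity of $F$ from the Main Theorem, and your write-up just unpacks that one-line deduction (functoriality of $\mathbf{\Phi}$ transports the diffeological diffeomorphism to a functional one, so $F(G)\cong F(H)$ and Theorem~A applies). No discrepancy to report.
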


\begin{proof}
This is immediate from \hypref{p:factor through diffeol}{Proposition} and the fact that $F$ is an essentially injective functor (due to the \hyperref[t:main]{Main Theorem}).
\end{proof}

\begin{remark}\labell{r:factor through diffeol}
\noindent
\begin{enumerate}
\item  In general, the functor $\mathbf{\Phi}:\diffeol\to\diffsp$ is not injective on objects, as the example below illustrates.  Also, while it is faithful, it is not full (see, for example, the end of Example 2.67 in \cite{watts}).  It remains an open question whether or not $\mathbf{\Phi}$ restricted to diffeological orbifolds is full.
\item Since $G$ is neither faithful nor full (see \hypref{x:not full nor faithful}{Example}), it follows from \hypref{p:factor through diffeol}{Proposition} that $F$ is neither faithful nor full.
\end{enumerate}
\end{remark}

\begin{example}[Rotations of $\RR^n$]\labell{x:rotations}
Let $O(n)$ act on $\RR^n$ by rotations about the origin.  Then the quotient diffeology $\mathcal{D}_n$ on $\RR^n/O(n)$ depends on $n$ (see Exercise 50 of \cite{iglesias} with solution at the back of the book).  The corresponding quotient differential structure which is equal to $\Phi\mathcal{D}_n$, however, is equal to $\CIN([0,\infty))$, the subspace differential structure of $[0,\infty)\subset\RR$.
\end{example}


\end{document}